\newtheorem{thm}{Theorem}
\newtheorem{cor}{Corollary}
\newtheorem{theorem}{Theorem}[section]  
\newtheorem{lemma}[theorem]{Lemma}  
\newtheorem{proposition}[theorem]{Proposition}
\newtheorem{corollary}[theorem]{Corollary}
\newtheorem{remark}[theorem]{Remark}
\newtheorem{definition}[theorem]{Definition}
\newtheorem{example}[theorem]{Example}
\newtheorem*{remark*}{Remark}
\newtheorem*{definition*}{Definition}
\providecommand{\customgenericname}{}
\newcommand{\newcustomtheorem}[2]{%
  \newenvironment{#1}[1]
  {%
   \ifdefined\crefalias\crefalias{innercustomgeneric}{#2}\fi
   \renewcommand\customgenericname{#2}%
   \renewcommand\theinnercustomgeneric{##1}%
   \innercustomgeneric
  }
  {\endinnercustomgeneric}%
  \ifdefined\crefname\crefname{#2}{#2}{#2s}\fi
}
\newcommand{\bb}[1]{\mathbb{#1}}
\newcommand{\defeq}{\mathrel{\mathpalette{\vcenter{\hbox{$:$}}}=}}
\newcommand{\SU}{\mathrm{SU}}
\newcommand{\SO}{\mathrm{SO}}
\newcommand{\U}{\mathrm{U}}
\newcommand{\GL}{\mathrm{GL}}
\newcommand{\SL}{\mathrm{SL}}
\newcommand{\PSL}{\mathrm{PSL}}
\newcommand{\A}{{\mathcal{A}}}
\newcommand{\V}{{\mathcal{V}}}
\newcommand{\E}{{\mathcal{E}}}
\newcommand{\G}{{\mathcal{G}}}
\newcommand{\Hom}{{\mathrm{Hom}}}
\newcommand{\R}{{\mathcal{R}}}
\newcommand{\Spin}{{\mathrm{Spin}}}
\title{Representation varieties of RAAGs}
\author[A.\ Bao]{A.\ Bao}
\address[Bao]{Department of Mathematics, University of Maryland, College Park, MD 20742}
\email{abao12@terpmail.umd.edu}
\author[A.\ Chakraborty]{A.\ Chakraborty}
\address[Chakraborty]{Mathematical Sciences Department, George Mason University, Fairfax, VA 22030}
\email{achakra5@gmu.edu}
\author[D.~L.~Duncan]{D.~L.~Duncan}
\address[Duncan]{Department of Mathematics \& Statistics, James Madison University, Harrisonburg VA 22801}
\email{duncandl@jmu.edu}
\author[J.\ Larson]{J.\ Larson}
\address[Larson]{Department of Mathematics, Purdue University, West Lafayette, IN 47907}
\email{tlarson.jordan@gmail.com}
\author[K.\ McBride]{K.\ McBride}
\address[McBride]{Department of Mathematics and Statistics, Binghamton University, Binghamton, NY 13902}
\email{kelsondmcbride@gmail.com}
\begin{document}

\begin{abstract}
We investigate the $G$-representation varieties of right-angled Artin groups (RAAGs) for various Lie groups $G$. We show these varieties are connected for a large class of such $G$, including $\SU(n), \mathrm{Sp}(n)$ and $\U(n)$, while they are generally not connected for other large classes, such as $\SO(n)$ and $\Spin(n)$ for $n \geq 3$. When $G = \SO(3)$ we determine the number of connected components of the representation variety associated to any RAAG that is also a 3-manifold group.
\end{abstract}

\maketitle

Right-angled Artin groups (RAAGs) arise in various areas of mathematics from topology to geometric group theory. From a geometric point of view, RAAGs are prototypical examples of CAT(0) groups and serve
as toy examples that mirror many important properties of and inform conjectures about more complicated groups, such as mapping class groups \cite{CW}\cite{Kob}. RAAGs also play a key role in the study of three–manifold topology through, e.g., Agol's virtual fibering criterion \cite{Agol2}, which culminated in his resolution of the virtual Haken conjecture \cite{Agol}. There is a natural way to associate to any (finite) graph $K$ a RAAG $\Gamma_K$, and we review this in Section \ref{sec:DefinitionsAndBasicConstructions}.

Given a finitely-presented group $\pi$ and a topological group $G$, one can form the \emph{$G$-representation variety} $\R(\pi, G)$, consisting of all homomorphisms from $\pi$ to $G$. This too has emerged as a powerful tool in many areas of mathematics. For example, low-dimensional topologists have observed that even relatively simple topological features of $\R(\pi_1(X), G)$, such as its number of connected components, can detect subtle features of a manifold $X$. An early instance of this can be seen in Casson's invariant \cite{AM}, while a somewhat more recent instance is Zentner's result \cite{Zent} that representation varieties can distinguish the standard 3-sphere among all homology 3-spheres. 

Our focus here is on the case where $\pi = \Gamma_K$ is the RAAG associated to a graph $K$. We seek to determine the number of connected components of $\R(K, G) \defeq \R(\Gamma_K, G)$. This number depends on both $K$ and $G$ but, as our analysis shows, this dependence on $G$ is trivial when $G$ satisfies the following property.

\begin{customprop}{A}\label{A} 
\emph{If $S \subseteq G$ is a finite set, then the inclusion $Z(G) \hookrightarrow C_G(S)$ induces a surjection on $\pi_0$.}
\end{customprop}

Here $Z(G)$ is the center of $G$ and $C_G(S)$ is the centralizer of $S\subseteq G$. We will give a complete characterization of the compact, connected, semisimple Lie groups that satisfy Property \ref{A}: Such a Lie group satisfies Property \ref{A} if and only if it is a product of $\SU(n)$'s and $\mathrm{Sp}(n)$'s (see Propositions \ref{prop:SU} and \ref{prop:SemiSimple}). Thus, $\SO(n)$ does \emph{not} satisfy Property \ref{A} for $n \geq 3$. Our first main result motivates the relevance of Property \ref{A} to the connectedness of the $G$-representation variety. 

\begin{thm}\label{thm:1}
If $G$ is connected and satisfies Property \ref{A}, then $\R(K, G)$ is connected for all finite graphs $K$. In particular, $\R(K, \SU(n))$, $\R(K, \mathrm{Sp}(n))$, and $\R(K, \U(n))$ are each connected.
\end{thm}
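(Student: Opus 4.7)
The plan is to prove path-connectedness of $\R(K, G)$ directly, by constructing, for every representation $\rho = (g_v)_{v \in V(K)}$, a continuous path in $\R(K,G)$ from $\rho$ to the trivial representation. A representation is just a tuple $(g_v) \in G^{V(K)}$ with $[g_v, g_w] = 1$ whenever $\{v,w\}$ is an edge of $K$, so it suffices to modify one coordinate at a time along a path that stays inside the centralizer of the current values at neighboring vertices. The construction has two stages: first deform $\rho$ into a representation whose coordinates all lie in $Z(G)$, then contract this central representation to the identity.

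For the first stage, I enumerate the vertices $v_1, \ldots, v_n$ and modify one coordinate at a time. Suppose inductively that the first $i-1$ coordinates have been replaced by central elements $z_{v_1}, \ldots, z_{v_{i-1}} \in Z(G)$, while $g_{v_i}, \ldots, g_{v_n}$ are unchanged; the commutation relations still hold since central elements commute with everything. Let $S_i := \{g_{v_k} : k > i,\ v_k \sim v_i\}$, which is a finite subset of $G$. The original relations give $g_{v_i} \in C_G(S_i)$, and Property \ref{A} provides some $z_{v_i} \in Z(G)$ lying in the same connected component of $C_G(S_i)$ as $g_{v_i}$. Choosing any path from $g_{v_i}$ to $z_{v_i}$ inside $C_G(S_i)$ and deforming only the $v_i$-coordinate along it keeps us in $\R(K,G)$, since the path commutes with $S_i$ by construction and with the central $z_{v_j}$ automatically. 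After $n$ such steps, all coordinates lie in $Z(G)$, and the second stage is immediate: central coordinates commute with everything in $G$, so using path-connectedness of $G$ I can deform each $z_{v_i}$ to $e$ one coordinate at a time along any path in $G$.

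For the ``in particular'' clause, Propositions \ref{prop:SU} and \ref{prop:SemiSimple} give Property \ref{A} for $\SU(n)$ and $\mathrm{Sp}(n)$. For $\U(n)$, decomposing $\mathbb{C}^n$ into the isotypic components of the group generated by any finite $S \subseteq \U(n)$ exhibits $C_{\U(n)}(S)$ as a product of unitary groups, hence connected, so Property \ref{A} holds vacuously. The main obstacle is the first stage: without Property \ref{A}, the centralizer of the neighbors of $v_i$ could have components disjoint from $Z(G)$, leaving $g_{v_i}$ with nowhere central to go. Everything else is bookkeeping to check that each intermediate tuple remains a valid representation.
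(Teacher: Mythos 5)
Your proof is correct and follows essentially the same two-stage strategy as the paper: inductively deform one vertex coordinate at a time into $Z(G)$ via Property \ref{A} along a path in the centralizer of its neighbors, then contract the central tuple to the identity using connectedness of $G$. One small slip in the ``in particular'' clause: Proposition \ref{prop:SemiSimple} shows groups \emph{fail} Property \ref{A}, while Proposition \ref{prop:SU} already covers all three of $\SU(n)$, $\mathrm{Sp}(n)$, and $\U(n)$ directly, making your separate isotypic-component argument for $\U(n)$ correct but unnecessary.
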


The proof is given in Section \ref{sec:Proof1}. This extends to arbitrary finite graphs a result of Florentino--Lawton  \cite[Lem. 5.16]{FL2}, who were working in the context of trees.

We therefore turn our attention to groups $G$ that \emph{do not} satisfy Property \ref{A}. We focus primarily on the case where $G = \SO(3)$. As we will see, the number of connected components of $\R(K, \SO(3))$ depends in a subtle way on the combinatorial features of $K$, and we do not know a general formula for this number. However, in some cases a reasonable count can be had. 

\begin{thm}\label{thm:2}
If $K$ is a tree or a cycle, then $\R(K, \SO(3))$ has $2^{\vert \E \vert}$ connected components, where $\E$ is the edge set of $K$. 
\end{thm}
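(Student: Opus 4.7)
The plan is to attach to each edge $e = \{v,w\} \in \E$ and each $\rho \in \R(K,\SO(3))$ a locally constant invariant $\sigma_e(\rho) \in \mathbb{Z}/2$, show that each of the $2^{|\E|}$ possible functions $\sigma \in (\mathbb{Z}/2)^\E$ is realized, and prove that each level set $\R_\sigma(K,\SO(3))$ is connected. I would define $\sigma_e$ using the spin cover $\SU(2) \to \SO(3)$: for any lifts $\tilde A_v, \tilde A_w \in \SU(2)$ of $\rho(v), \rho(w)$, set $(-1)^{\sigma_e(\rho)} = [\tilde A_v, \tilde A_w] \in \{\pm I\}$, which is lift-independent because replacing a lift by its negative leaves the commutator unchanged. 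A direct calculation identifies $\sigma_e = 0$ with the condition that $\rho(v)$ and $\rho(w)$ share a common maximal torus of $\SO(3)$, and $\sigma_e = 1$ with the condition that they are non-trivial involutions about perpendicular axes; these are the two clopen strata of the commuting variety in $\SO(3)^2$, so $\sigma_e$ is locally constant and $\R(K,\SO(3)) = \bigsqcup_{\sigma} \R_\sigma(K,\SO(3))$. Realizability of each $\sigma$ is settled by an explicit construction: root the tree (or cut the cycle open at some edge), and inductively assign each vertex a $\pi$-rotation whose axis is equal to or orthogonal to its parent's axis according to $\sigma_e$; when $K$ is a cycle one can set $A_v = I$ at a suitable vertex to absorb any conflict at the closing edge.

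For connectedness of each $\R_\sigma$, let $\V_1 \subseteq \V$ denote the vertices incident to some edge with $\sigma_e = 1$ and let $K_1 \subseteq K$ be the induced subgraph. In any $\rho \in \R_\sigma$, the element $A_v$ is forced to be a non-trivial involution for $v \in \V_1$, while for $v \notin \V_1$ every adjacent edge satisfies $\sigma_e = 0$ and hence $A_v$ shares a common maximal torus with every neighbour. The first step is to deform any $\rho \in \R_\sigma$ within $\R_\sigma$ to a standard representation $\rho^{\mathrm{std}}$ in which $A_v = I$ for every $v \notin \V_1$: when $A_v \neq I$ the maximal torus $T_v$ containing $A_v$ is uniquely determined and contains every neighbouring $A_u$, so one contracts $A_v$ to $I$ along $T_v$ while holding the other $A_w$ fixed, staying in $\R_\sigma$ throughout. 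Processing the vertices outside $\V_1$ one at a time reduces the problem to showing the standard subspace $\R_\sigma^{\mathrm{std}}$ is connected. Sending each involution $A_v$ to its unoriented axis $\ell_v \in \mathbb{RP}^2$ identifies $\R_\sigma^{\mathrm{std}}$ with the space of labellings $\ell \colon \V_1 \to \mathbb{RP}^2$ satisfying $\ell_u = \ell_v$ on edges of $K_1$ with $\sigma_e = 0$ and $\ell_u \perp \ell_v$ on edges with $\sigma_e = 1$; contracting the $\sigma_e = 0$ edges produces a graph $K_1^\ast$ in which every remaining edge is labelled $1$.

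When $K$ is a tree, $K_1^\ast$ is a forest; on each component, rooting exhibits the axis configuration space as an iterated $\mathbb{RP}^1$-bundle over $\mathbb{RP}^2$ (each successive axis lies in the $2$-plane perpendicular to its parent's axis), which is connected, and the product over components is connected. When $K$ is a cycle, either $K_1$ is a proper subgraph (hence a disjoint union of paths, handled as above) or $K_1 = K$, in which case $K_1^\ast = C_m$ (a cycle or digon) with $m \geq 2$ and every edge labelled $1$, and the remaining task is to prove connectedness of the cyclic configuration space
\[
X_m = \{(\ell_1, \dots, \ell_m) \in (\mathbb{RP}^2)^m : \ell_i \perp \ell_{i+1 \bmod m}\}.
\]
I expect this cyclic case to be the main obstacle, since the closing perpendicularity $\ell_m \perp \ell_1$ is a global constraint with no analogue in the tree case. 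I would handle it by stratifying $X_m$ by the coincidence pattern among second-neighbours $\ell_{i-1}, \ell_{i+1}$: on the open stratum where every such pair is distinct, each $\ell_i$ is rigidly determined by its neighbours as the unique line perpendicular to both, and the stratum is a connected open subset of the path-space $\{\ell_i \perp \ell_{i+1} : i < m\}$; the deeper strata, where some $\ell_{i-1} = \ell_{i+1}$ unlocks an $\mathbb{RP}^1$ of choices for $\ell_i$, lie in the closure of the open stratum because as $\ell_{i-1} \to \ell_{i+1}$ the previously rigid $\ell_i$ can be made to approach any line in their common perpendicular plane. Combining realizability with the connectedness of each $\R_\sigma$ then yields exactly $2^{|\E|}$ connected components of $\R(K, \SO(3))$.
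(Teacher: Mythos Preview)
Your invariant $\sigma$ is exactly the paper's obstruction map $o_K$, and your overall architecture---show $\sigma$ is locally constant, surjective, and has connected fibers after reducing to the case where every edge is marked $1$---is the paper's strategy as well. The reduction step (contracting $A_v$ to $I$ along its torus for $v\notin\V_1$, then collapsing the $\sigma_e=0$ edges of $K_1$) is equivalent to the paper's Vertex-Deletion and Edge-Contraction Lemmas, and your iterated $\mathbb{RP}^1$-bundle picture for trees is the $\mathbb{RP}^2$-version of the paper's induction.

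The genuine gap is in your treatment of the cyclic space $X_m$. The stratification by ``all second neighbours distinct'' does not work: for $m=4$ the open stratum is \emph{empty}. Indeed, if $\ell_1\neq\ell_3$ then $\ell_2$ and $\ell_4$ are both forced to be the unique line perpendicular to $\ell_1$ and $\ell_3$, so $\ell_2=\ell_4$; hence every point of $X_4$ has some coincidence $\ell_{i-1}=\ell_{i+1}$. Your density claim (``deeper strata lie in the closure of the open stratum'') therefore fails, and the argument collapses. There is also a smaller issue in realizability: ``set $A_v=I$ at a suitable vertex'' only helps when some vertex has both incident edges marked $0$, which need not occur (e.g.\ an alternating pattern on $C_4$, or the all-$1$ marking).

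Both problems disappear if you reuse your tree argument for the cycle. Project $X_m\to Y_{m-1}$ by forgetting $\ell_m$; the base $Y_{m-1}$ (a path configuration space) is your iterated $\mathbb{RP}^1$-bundle, hence connected and nonempty, and the fiber over $(\ell_1,\dots,\ell_{m-1})$ is $\ell_{m-1}^\perp\cap\ell_1^\perp$, which is a single point if $\ell_1\neq\ell_{m-1}$ and an $\mathbb{RP}^1$ otherwise---connected and nonempty in either case. This is precisely the paper's Proposition~\ref{prop:ConnectedFiber}, which handles trees and cycles uniformly via a vertex ordering in which each vertex has at most two earlier neighbours; the cycle is not an obstacle at all once phrased this way.
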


The proof of this theorem is given in Section \ref{sec:Proofs} and is predicated on a comparison between the components of $\R(K, \SO(3))$ and the edge markings of $K$ by elements of $\pi_1(\SO(3)) \cong \left\{ \pm 1 \right\}$. This comparison is brought about by a certain obstruction map $o_K: \R(K, \SO(3)) \rightarrow \left\{ \pm 1 \right\}^{\E}$, which is related to the second Stiefel--Whitney class for principal $\mathrm{SO}(n)$-bundles; see Section \ref{sec:ASW}. Indeed, there is a close link between the representation variety $\R(\pi_1(X), G)$ and the space $\A_{\mathrm{flat}}(P)$ of flat connections on a given principal $G$-bundle $P \rightarrow X$. In terms of principal bundles, we have the following.

\begin{thm}\label{thm:3}
Suppose $X$ is a smooth manifold with $\pi_1(X) \cong \Gamma_K$ a RAAG for a graph $K$ that is either a tree or a cycle. Let $P \rightarrow X$ be a principal $\SO(3)$-bundle (or an oriented real vector bundle of rank 3). Then the quotient $\A_{\mathrm{flat}}(P)/\G(P)$ is connected when it is nonempty, where $\G(P)$ is the group of bundle automorphisms of $P$ covering the identity. 
\end{thm}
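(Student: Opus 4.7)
The plan is to deduce Theorem \ref{thm:3} from Theorem \ref{thm:2} via the holonomy correspondence. After fixing a basepoint $x_0 \in X$ and a frame $p_0 \in P_{x_0}$, the holonomy map gives a homeomorphism
$$\A_{\mathrm{flat}}(P)/\G(P) \;\cong\; \R_P/\SO(3), \quad \text{where } \R_P := \{ \rho \in \R(K, \SO(3)) : P_\rho \cong P \},$$
with $P_\rho := \tilde{X} \times_\rho \SO(3)$ the flat bundle associated to $\rho$. Since $\SO(3)$ is connected, $\R_P/\SO(3)$ is connected whenever $\R_P$ is, so it suffices to show $\R_P$ is a connected subspace of $\R(K, \SO(3))$.

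I would argue that $\R_P$ is exactly one of the $2^{\vert \E \vert}$ components of $\R(K, \SO(3))$ identified in Theorem \ref{thm:2}. On the one hand, the map $\rho \mapsto [P_\rho]$ is locally constant on $\R(K,\SO(3))$ --- a continuous path of representations yields a continuous family of principal bundles, whose isomorphism class is discrete --- so $\R_P$ is a union of components. On the other hand, I would show that $P_\rho \cong P_{\rho'}$ forces $o_K(\rho) = o_K(\rho')$, placing $\R_P$ inside a single level set $o_K^{-1}(s)$. Since by Theorem \ref{thm:2} each such level set is a single component, the two facts together give $\R_P = o_K^{-1}(s)$ (assuming $\R_P$ is nonempty), which is connected.

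For the implication $P_\rho \cong P_{\rho'} \Rightarrow o_K(\rho) = o_K(\rho')$, let $\iota : X \to B\Gamma_K$ classify the universal cover of $X$; then $P_\rho = \iota^* E_\rho$, where $E_\rho \to B\Gamma_K$ is the universal flat bundle on the Salvetti complex associated to $\rho$. Since $\iota$ is a $\pi_1$-isomorphism, Hopf's theorem gives surjectivity of $\iota_* : H_2(X; \mathbb{Z}/2) \to H_2(B\Gamma_K; \mathbb{Z}/2)$, and hence injectivity of $\iota^*$ on $H^2(-; \mathbb{Z}/2)$. In particular, $P_\rho \cong P_{\rho'}$ implies $w_2(E_\rho) = w_2(E_{\rho'})$. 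A cellular computation on the Salvetti complex --- which for $K$ a tree or cycle of length at least four is a $2$-complex with one $T^2$-cell per edge of $K$, and which is $T^3$ when $K$ is a triangle --- then identifies the pairing of $w_2(E_\rho)$ with the $e$-th $2$-cell as the entry $o_K(\rho)_e$, finishing the reduction.

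The main obstacle is this last identification: matching the abstract obstruction $o_K(\rho)_e$ of Section \ref{sec:ASW} with the evaluation of $w_2(E_\rho)$ against the $T^2$-cell for edge $e$. This amounts to a local computation on a single torus, equating the $w_2$ of a flat $\SO(3)$-bundle on $T^2$ with the obstruction to lifting the commuting pair $(\rho(\gamma_v), \rho(\gamma_w))$ from $\SO(3)$ to $\Spin(3)$, but the precise formulation must respect the conventions used to define $o_K$. Once this is settled, everything else --- the holonomy correspondence, continuity of $\rho \mapsto P_\rho$, and Hopf's theorem --- is standard, and the component count of Theorem \ref{thm:2} together with the connectedness of $\SO(3)$ delivers the conclusion.
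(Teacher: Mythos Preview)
Your argument is correct and follows the same overall arc as the paper: identify $\A_{\mathrm{flat}}(P)/\G(P)$ (up to the connected $\SO(3)$-quotient) with a single fiber $o_K^{-1}(\Lambda)$ of the obstruction map, then invoke the connectedness of that fiber. Where the two diverge is in how they pin down that bundle isomorphism classes correspond to values of $o_K$. The paper does this by a direct cell-by-cell construction (Lemma \ref{lem:FlatCon} and Corollary \ref{cor:Components}): it builds $P_x$ over the $2$-skeleton of $X$ using $o_K(x)_e$ as clutching data over each $2$-cell and observes this determines the bundle up to isomorphism, so that $\A_{\mathrm{flat}}(P)/\G_0(P)\cong o_K^{-1}(\Lambda)$ immediately. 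You instead package the same content cohomologically, passing to the Salvetti model for $B\Gamma_K$ via Hopf's theorem and reading $o_K(\rho)$ off as the pairings of $w_2(E_\rho)$ with the edge-tori. Your route is more conceptual---it makes the link to $w_2$ explicit, which the paper relegates to the digressive Section \ref{sec:ASW} and explicitly does \emph{not} use---but it costs you the Hopf step and the Salvetti bookkeeping, neither of which the paper needs. Two small points worth tightening: your appeal to ``Theorem \ref{thm:2}'' for ``each level set is a single component'' is really an appeal to the finer bijection $\pi_0(\R(K,\SO(3)))\cong\{\pm1\}^{\E}$ of Theorems \ref{thm:Inj} and \ref{thm:Surj}; and Hopf's theorem with $\bb{Z}/2$ coefficients goes through cleanly here because $H_*(\Gamma_K;\bb{Z})$ is free, which you should note.
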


We give a general discussion of bundles in Section \ref{sec:Bundles}, where we also indicate precisely when $\A_{\mathrm{flat}}(P)$ is nonempty. In particular, when $X$ is a 3-manifold we can couple our results with a theorem of Droms \cite{Droms} to get:

\begin{cor}\label{cor:4}
Suppose $X$ is a 3-manifold with $\pi_1(X)$ a RAAG. If $P \rightarrow X$ is a principal $\SO(3)$-bundle (or oriented real vector bundle of rank 3), then the space $\A_{\mathrm{flat}}(P) / \G(P)$ is connected and not empty. 
\end{cor}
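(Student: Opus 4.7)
The plan is to combine Droms' classification with Theorems \ref{thm:3} and \ref{thm:2}. First, I would invoke Droms \cite{Droms}: since $\pi_1(X) \cong \Gamma_K$ is a RAAG realized as a 3-manifold group, each connected component of $K$ is either a tree or a triangle (a 3-cycle). Thus every component falls into the scope of Theorems \ref{thm:2} and \ref{thm:3}, although those results address one component at a time while $K$ may be disconnected.

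For the connectedness assertion, I would write $K = K_1 \sqcup \cdots \sqcup K_m$ for the decomposition into connected components, so that $\Gamma_K = \Gamma_{K_1} * \cdots * \Gamma_{K_m}$ is a free product. A representation $\rho : \pi_1(X) \to \SO(3)$ then decomposes as a tuple $(\rho_1, \ldots, \rho_m)$, and after fixing the topological type of the bundle $P$ the space $\A_{\mathrm{flat}}(P)/\G(P)$ identifies with the quotient of a product of component-wise moduli spaces by the diagonal conjugation action of $\SO(3)$. Applying Theorem \ref{thm:3} to each factor shows each is connected when nonempty, and the quotient of a product of connected spaces by the continuous action of the connected group $\SO(3)$ is again connected, giving the first half of the claim.

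Non-emptiness requires showing every $\SO(3)$-bundle over $X$ admits a flat connection. Since $X$ is a 3-manifold, such bundles are classified up to isomorphism by $w_2 \in H^2(X; \mathbb{Z}/2)$. The obstruction map $o_K : \R(K, \SO(3)) \to \{\pm 1\}^{\E}$ from Section \ref{sec:ASW} is surjective on each component (surjectivity is implicit in the count $2^{|\E|}$ of Theorem \ref{thm:2}), and combined with the analysis of bundle realizability foreshadowed in Section \ref{sec:Bundles} this should show that every isomorphism class of $\SO(3)$-bundle on $X$ is realized by some flat bundle. The main obstacle I anticipate is exactly this last identification: matching the group-theoretic invariant valued in $\{\pm 1\}^{\E}$ with the topological invariant $w_2 \in H^2(X; \mathbb{Z}/2)$, which requires comparing $H^2(\Gamma_K; \mathbb{Z}/2)$ to $H^2(X; \mathbb{Z}/2)$ for the particular 3-manifolds arising in Droms' classification and verifying that the relevant comparison map is surjective on $w_2$-classes realized by actual bundles.
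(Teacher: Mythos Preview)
Your outline is essentially correct, and your care with disconnected $K$ is warranted (Droms says each \emph{component} of $K$ is a tree or a triangle). But both detours you take are avoidable, and the obstacle you anticipate is not real.

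For connectedness, there is no need to split $K$ into components, realize each on its own manifold, and analyze a diagonal $\SO(3)$-action on a product of moduli spaces. Theorems~\ref{thm:Inj} and~\ref{thm:Surj} are stated and proved for \emph{disjoint unions} of trees and cycles, so $o_K^{-1}(\Lambda)$ is connected for every $\Lambda$ directly. Corollary~\ref{cor:Components} then identifies $\A_{\mathrm{flat}}(P)/\G_0(P)$ with such a fiber, and the sequence~(\ref{eq:Gauge}) passes connectedness down to the $\G(P)$-quotient. (If you do insist on the factor-by-factor route, note that Theorem~\ref{thm:3} gives connectedness of $\A_{\mathrm{flat}}(P_i)/\G(P_i)$, not of the $\G_0$-quotients you would want as factors in the product; you would have to reach inside its proof, which amounts to citing Theorem~\ref{thm:Inj} for the disjoint union anyway.)

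For non-emptiness, the ``analysis of bundle realizability'' you point to in Section~\ref{sec:Bundles} is exactly Corollary~\ref{cor:Surj}, and it already delivers the conclusion without any comparison between $\{\pm 1\}^{\E}$ and $H^2(X;\bb{Z}/2\bb{Z})$. Its proof, via the attaching-map construction in Lemma~\ref{lem:FlatCon}, shows directly that on a $3$-manifold every principal $\SO(3)$-bundle is isomorphic to some $P_\Lambda$ (this uses only $\pi_2(G)=0$), so surjectivity of $o_K$ from Theorem~\ref{thm:Surj} forces every bundle to carry a flat connection. The identification of group cohomology with singular cohomology and the Stiefel--Whitney discussion in Section~\ref{sec:ASW} are never invoked; your anticipated obstacle does not arise.
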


Proofs of Theorem \ref{thm:3} and Corollary \ref{cor:4} will be given in Section \ref{sec:Proofs}. 

\begin{remark*}
(a) One might ask whether Theorem \ref{thm:2} extends to other groups without Property \ref{A}, such as $\SO(n)$ for $n \geq 4$. Though some of our analysis does extend, much of it does not. More specifically, our proof of Theorem \ref{thm:2} relies heavily on an understanding of the commutator map in the universal cover $\widetilde{G} = S^3$ of $G = \SO(3)$; see Lemmas \ref{lem:Com} and \ref{lem:AntiCom}. Another feature we use is that noncentral elements of $\widetilde{G}$ have abelian centralizers. This latter feature, for example, is not true of $\SO(n)$ for $n \geq 4$.

\medskip

(b) Wang \cite{Wang} has given explicit faithful representations of RAAGs into various groups of higher rank.
\end{remark*}

\medskip

\textbf{Acknowledgements}: This work was completed as part of an REU hosted at James Madison University (JMU) and supported by the National Science Foundation (NSF) through NSF Grant Number DMS 2349593. We are grateful to both JMU and the NSF for their support. We would also like to thank Sean Lawton for his helpful suggestions.

\section{Definitions and basic constructions}\label{sec:DefinitionsAndBasicConstructions}

Suppose $K = (\V, \E)$ is a simple graph with vertex set $\V$ and edge set $\E$; we will always assume $K$ is a finite graph. If $e \in \E$ is an edge, we will write $v(e)$ and $w(e)$ for the vertices to which $e$ is incident; since there are no multi-edges, we can identify $e$ with the pair $\left\{v(e), v(w) \right\}$. Define $\Gamma_K$ to be the group with a generator $x_v$ for each $v \in \V$ and the relation $x_{v} x_{w} = x_{w} x_{v}$ for each edge $\left\{v, w \right\} \in \E$. When $K$ is the empty graph (no vertices and no edges), we take $\Gamma_K$ to be the trivial group. We call $\Gamma_K$ the \emph{right-angled Artin group (RAAG)} associated to $K$. 

For example, if $K$ has no edges, then $\Gamma_K$ is a free group. At the other extreme, when $K$ is a complete graph, $\Gamma_K$ is a free Abelian group. As such, RAAGs interpolate between free groups and free Abelian groups. 

The study of RAAGs goes back at least to Braudisch \cite{Brau}, where they are referred to as \emph{semi-free groups}. Other names have also been used; for example, Droms \cite{Droms} refers to them as \emph{graph groups}.

\medskip

Suppose $\pi$ is a finitely presented group and $G$ is a topological group. The  \emph{$G$-representation variety} of $\pi$ is the set 
$$\R(\pi, G) \defeq \Hom(\pi, G)$$ 
of all homomorphisms from $\pi$ to $G$. We equip this with the compact-open topology relative to the discrete topology on $\pi$. The $G$-representation variety gets its name from the case where $G$ is an algebraic group, for then $\R(\pi, G)$ is naturally a variety. Here we will be primarily interested in the topological features of $\R(\pi, G)$ and so we do not require that $G$ be an algebraic group; see \cite[\S 2.2]{FL} for a discussion of related matters. 

Restrict attention to the case where $\pi = \Gamma_K$ is a RAAG and set $\R(K, G) = \R(\Gamma_K, G)$ as above. The map
\begin{equation}\label{eq:repemb}
\R(K, G) \longrightarrow G^\V, \hspace{1cm} \rho \longmapsto (\rho(v))_{v \in \V}
\end{equation}
is a topological embedding with image equal to the set of tuples $(g_v)_{v \in \V}$ satisfying $[g_{v}, g_{w}]  = 1$ for all $\left\{v, w \right\} \in \E$. Here $[g, h] = gh g^{-1} h^{-1}$ is the group commutator and $1$ is the identity. We will identify $\R(K, G)$ with its image under (\ref{eq:repemb}).

\section{Groups with Property \ref{A}}\label{sec:GroupsWithA}

\subsection{Examples}

Let's start with a straight-forward case.

\begin{proposition}
If $G$ is abelian, then $G$ has Property \ref{A}.
\end{proposition}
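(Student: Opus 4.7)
The plan is to observe that abelianness collapses both sides of the inclusion in Property \ref{A} onto $G$ itself. Specifically, if $G$ is abelian then every element commutes with every other element, so for any subset $S \subseteq G$ (finite or not) the centralizer $C_G(S)$ is all of $G$; likewise, the center $Z(G)$ equals $G$. Hence the inclusion $Z(G) \hookrightarrow C_G(S)$ in the statement of Property \ref{A} is simply the identity map $G \to G$.

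The induced map on $\pi_0$ is therefore the identity on $\pi_0(G)$, which is trivially surjective. There is no obstacle here; the whole argument is a one-line unwinding of the definitions of $Z(G)$ and $C_G(S)$ under the hypothesis that $G$ is abelian. I would write it as two short sentences and conclude.
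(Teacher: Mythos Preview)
Your proof is correct and essentially identical to the paper's own argument: both observe that for abelian $G$ one has $Z(G) = G = C_G(S)$, so the inclusion is the identity map and hence $\pi_0$-surjective.
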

\begin{proof}
When $G$ is abelian $Z(G) = G = C_G(S)$ for all $S \subseteq G$. Thus, the inclusion $Z(G) \hookrightarrow C_G(S)$ is the identity map, which is certainly $\pi_0$-surjective. 
\end{proof}

\begin{proposition}
If $G_0$ and $G_1$ each have Property \ref{A}, then so does $G_0 \times G_1$.
\end{proposition}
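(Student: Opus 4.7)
The plan is to reduce the statement for $G=G_0\times G_1$ to the hypothesis on each factor, using that both centralizers and centers split as products.

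First I would set up notation. Given a finite subset $S\subseteq G_0\times G_1$, let $S_i\defeq \pi_i(S)\subseteq G_i$ denote the projection, where $\pi_i\colon G_0\times G_1\to G_i$ is the canonical projection. Each $S_i$ is finite since $S$ is. The key algebraic observation is that an element $(g_0,g_1)$ commutes with every $(s_0,s_1)\in S$ if and only if $g_i$ commutes with $s_i$ for all $(s_0,s_1)\in S$ and $i=0,1$, which gives the identification
\[
C_{G_0\times G_1}(S)=C_{G_0}(S_0)\times C_{G_1}(S_1).
\]
Combined with the standard identity $Z(G_0\times G_1)=Z(G_0)\times Z(G_1)$, the inclusion $Z(G_0\times G_1)\hookrightarrow C_{G_0\times G_1}(S)$ becomes the product of the two inclusions $Z(G_i)\hookrightarrow C_{G_i}(S_i)$.

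Next I would pass to $\pi_0$. Since $\pi_0$ commutes with finite products, the induced map on $\pi_0$ factors as
\[
\pi_0(Z(G_0))\times\pi_0(Z(G_1))\longrightarrow \pi_0(C_{G_0}(S_0))\times\pi_0(C_{G_1}(S_1)),
\]
which is just the product of the two individual $\pi_0$-maps. Each of these is surjective by Property \ref{A} applied to $G_0$ with the finite set $S_0$ and to $G_1$ with the finite set $S_1$. A product of surjections is a surjection, completing the proof.

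There is essentially no obstacle here; the only small thing to be careful about is confirming $C_{G_0\times G_1}(S)=C_{G_0}(S_0)\times C_{G_1}(S_1)$ (rather than, say, something involving $C_{G_0}(\pi_0(S))\cap\cdots$), which follows immediately from the componentwise multiplication in the direct product.
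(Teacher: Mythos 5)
Your proof is correct and takes essentially the same approach as the paper: project $S$ to finite sets $S_i$ in each factor, apply Property \ref{A} to each $G_i$, and combine. The only cosmetic difference is that you phrase the argument via the product decompositions $C_{G_0\times G_1}(S)=C_{G_0}(S_0)\times C_{G_1}(S_1)$ and $Z(G_0\times G_1)=Z(G_0)\times Z(G_1)$ together with $\pi_0$ commuting with finite products, whereas the paper constructs the connecting path $(\gamma_0(t),\gamma_1(t))$ explicitly.
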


\begin{proof}
Let $S \subseteq G_0 \times G_1$, and write $S = \left\{(s_{01}, s_{11}), \ldots, (s_{0n}, s_{1n}) \right\}$ for $s_{ij} \in G_i$. Fix $g \in Z_{G_0 \times G_1}(S)$ and write $g = (g_0, g_1)$. Define $S_i \defeq \left\{ s_{ij} \right\}_j$, which is a finite subset of $G_i$. By assumption there is a path $\gamma_i: [0, 1] \rightarrow Z_{G_i}(S_i)$ from $g_i$ to $Z(G_i)$. Then $\gamma(t) \defeq (\gamma_0(t), \gamma_1(t))$ commutes with each element of $S$, and so defined a path in $Z_{G_0 \times G_1}(S)$ from $g$ to $Z(G_0 \times G_1) = Z(G_0) \times Z(G_1)$. 
\end{proof}

Next we will show that each of $\SU(n), \mathrm{Sp}(n)$, and $\U(n)$ have Property \ref{A}. Writing $E_{\lambda}(A)$ for the $\lambda$-eigenspace of a square matrix $A$, we will first prove the following lemma.

\begin{lemma}
    Suppose $G \subseteq \GL_n(\bb{C})$ is a matrix group and let $A, A', B \in G$. Assume that (i) $A$ is diagonalizable and (ii) if $\lambda \in \bb{C}$ is an eigenvalue of $A$, then there is some eigenvalue $ \lambda' \in \bb{C}$ of $A'$ with
     \[ E_{\lambda}(A) \subseteq E_{\lambda'}(A'). \]
    If $A \in C_G(B)$, then $A' \in C_G(B)$.
 \end{lemma}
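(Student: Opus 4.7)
The plan is to exploit the diagonalizability of $A$ to decompose $\bb{C}^n$ into $A$-eigenspaces, then check that $A'$ and $B$ commute on each summand separately.

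First I would record the standard consequence of $AB = BA$: each eigenspace $E_{\lambda}(A)$ is $B$-invariant. Indeed, if $Av = \lambda v$, then $A(Bv) = B(Av) = \lambda Bv$, so $Bv \in E_{\lambda}(A)$. Second, because $A$ is diagonalizable, we have the direct sum decomposition
\[ \bb{C}^n = \bigoplus_{\lambda} E_{\lambda}(A), \]
where $\lambda$ ranges over the eigenvalues of $A$. Hence to check $A'B = BA'$ it suffices to verify this identity on each summand $E_{\lambda}(A)$.

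Now fix an eigenvalue $\lambda$ of $A$, and let $\lambda'$ be an eigenvalue of $A'$ supplied by hypothesis (ii), so that $E_{\lambda}(A) \subseteq E_{\lambda'}(A')$. For any $v \in E_{\lambda}(A)$ we then have $A'v = \lambda' v$. On the other hand, by the first step, $Bv$ also lies in $E_{\lambda}(A) \subseteq E_{\lambda'}(A')$, so $A'(Bv) = \lambda' Bv$. Comparing the two computations,
\[ A'Bv = \lambda' Bv = B(\lambda' v) = B A' v. \]
Thus $A'$ and $B$ commute on every eigenspace of $A$, and so on all of $\bb{C}^n$ by the direct sum decomposition. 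Since $A', B \in G$, this gives $A' \in C_G(B)$, as desired.

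The argument is essentially bookkeeping once the eigenspace decomposition is in hand; the only thing to be careful about is that hypothesis (ii) does not require $\lambda = \lambda'$ or any matching of multiplicities, so the association $\lambda \mapsto \lambda'$ need not be injective. That is harmless here because we only need $A'$ to act by \emph{some} scalar on $E_{\lambda}(A)$, which is exactly what the containment $E_{\lambda}(A) \subseteq E_{\lambda'}(A')$ provides.
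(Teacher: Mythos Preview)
Your proof is correct and follows essentially the same approach as the paper: both use the eigenspace decomposition of $A$ together with the $B$-invariance of each $E_\lambda(A)$. The paper takes a slight detour by first deducing that $A'$ is diagonalizable and that $B$ preserves each $A'$-eigenspace, whereas you verify $A'Bv = BA'v$ directly on each $E_\lambda(A)$; your version is marginally more streamlined but the underlying idea is the same.
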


 \begin{proof}
Enumerate the eigenvalues of $A'$ by $\left\{\lambda_i' \right\}_i$. Since $A$ is diagonalizable, condition (ii) implies that, for each index $i$, there are eigenvalues $\left\{\lambda_{i k} \right\}_k$ of $A$ with
 $$E_{\lambda_{i}'} (A')= \bigoplus_k E_{\lambda_{i k}} (A).$$
This also implies that $A'$ is diagonalizable:
$$\bb{C}^n = \bigoplus_{\lambda} E_{\lambda}(A) \subseteq \bigoplus_{\lambda'} E_{\lambda'}(A') \subseteq \bb{C}^n.$$

If $B$ commutes with $A$, then $B$ preserves each $A$-eigenspace $E_{\lambda_{i k}} (A)$. Thus, $B$ preserves each $A'$-eigenspace $E_{\lambda_{i}'} (A')$. Since $A'$ is diagonalizable, this implies $B$ commutes with $A'$. 
 \end{proof}
 
\begin{proposition}\label{prop:SU}
For $n \geq 1$, the groups $\SU(n), \mathrm{Sp}(n)$ and $\U(n)$ have Property \ref{A}. 
\end{proposition}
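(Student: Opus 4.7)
The plan is to use the preceding lemma to deform any $B \in C_G(S)$ continuously to an element of $Z(G)$ while remaining inside $C_G(S)$. Writing the spectral decomposition $B = \sum_k \lambda_k P_k$ (valid since elements of these compact groups are diagonalizable), I will construct a path $B(t) = \sum_k \lambda_k(t) P_k$ with $\lambda_k(0) = \lambda_k$ and $B(1) \in Z(G)$, engineered so that the partition of $\bb{C}^n$ by the eigenspaces of $B(t)$ only gets coarser as $t$ grows (distinct eigenspaces may merge but never split). Since the original $B$ commutes with every $s \in S$, applying the lemma with $A = B$ and $A' = B(t)$ then yields $s \in C_G(B(t))$, so the path lies inside $C_G(S)$ throughout.

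For $G = \U(n)$, pick any lifts $\theta_k \in \bb{R}$ with $\lambda_k = e^{i\theta_k}$ and take $\lambda_k(t) = e^{i(1-t)\theta_k}$. The path remains in $\U(n)$, all eigenvalues converge to $1$ at $t = 1$, and the endpoint $I$ lies in $Z(\U(n))$; in fact this shows $C_{\U(n)}(S)$ is already connected.

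For $G = \SU(n)$, we must additionally preserve $\det B(t) = 1$. I will use the affine interpolation $\theta_k(t) = (1-t)\theta_k + t\theta^*$ with $\theta^* = \frac{1}{n}\sum_k d_k\theta_k$, where $d_k = \dim P_k$. Since $\det B = 1$ forces $\sum_k d_k\theta_k = 2\pi m$ for some integer $m$, the quantity $\sum_k d_k\theta_k(t)$ is independent of $t$, so $\det B(t) = e^{2\pi i m} = 1$ throughout, and the endpoint $e^{i\theta^*}I = e^{2\pi im/n}I$ lies in $\mu_n \cdot I = Z(\SU(n))$. Thus every component of $C_{\SU(n)}(S)$ meets $Z(\SU(n))$.

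For $G = \mathrm{Sp}(n)$, the eigenvalues of $B$ (viewed in $\U(2n)$) occur in $J$-conjugate pairs $(e^{i\theta}, e^{-i\theta})$ under the quaternionic structure $J$, and I will interpolate each such pair symmetrically so that the $J$-pairing is preserved at every $t$; this keeps $B(t) \in \mathrm{Sp}(n)$ on the generic part of the spectrum. Collapsing all paired eigenvalues to $\pm 1$ yields an endpoint in $\{\pm I\} = Z(\mathrm{Sp}(n))$. The main obstacle is the self-$J$-paired eigenspaces $E_{\pm 1}(B)$ when both are nonzero: a purely scalar interpolation of a $-1$-eigenspace toward $+1$ leaves $\mathrm{Sp}(n)$ (scalars $e^{i\theta}I$ commute with the quaternionic structure on the eigenspace only for $e^{i\theta}\in\{\pm 1\}$), so I anticipate needing to replace scalar paths on $E_{\pm 1}$ by quaternionic rotations inside the $\mathrm{Sp}(\dim_{\bb{C}} E_{\pm 1}/2)$-centralizer factor, exploiting the simple-connectedness of $\mathrm{Sp}(n)$ to lift and close up the path in $C_{\mathrm{Sp}(n)}(S)$. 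This is the delicate step where the quaternionic analog of the $\SU(n)$ determinant-balancing trick must be carried out.
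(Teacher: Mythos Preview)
For $\U(n)$ and $\SU(n)$ your argument is essentially the paper's: diagonalize, then slide the eigenvalues toward a common value along a path in the maximal torus, invoking the preceding lemma at each instant to remain inside $C_G(S)$. The paper phrases this uniformly (``since $T$ is connected, choose a path to the identity'') while you write down explicit interpolations and are more careful about the determinant constraint in $\SU(n)$; both versions are fine.

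For $\mathrm{Sp}(n)$ your hesitation is warranted, and the ``delicate step'' you anticipate cannot be carried out---because the statement is in fact false for $\mathrm{Sp}(n)$ once $n \geq 2$. Realize $\mathrm{Sp}(2)$ as $2\times 2$ quaternionic unitary matrices and take $S=\{\,iI_2,\ jI_2\,\}$. Any $g\in C_G(S)$ must have entries commuting with both $i$ and $j$, hence real entries, so $C_G(S)=O(2)$. Since $\det_{\bb{R}}(\pm I_2)=1$, the center $Z(\mathrm{Sp}(2))=\{\pm I_2\}$ lies entirely in the identity component $\SO(2)$, whereas $B=\mathrm{diag}(1,-1)\in C_G(S)$ lies in the other component; thus $Z(G)\hookrightarrow C_G(S)$ is not $\pi_0$-surjective and Property~\ref{A} fails. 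For general $n\geq 2$, take $S=\{\mathrm{diag}(i,i,1,\ldots,1),\ \mathrm{diag}(j,j,1,\ldots,1)\}$, which gives $C_G(S)\cong O(2)\times\mathrm{Sp}(n-2)$ with the same conclusion. The paper's proof handles $\mathrm{Sp}(n)$ in the same stroke as $\U(n)$ and $\SU(n)$ and does not confront the obstruction you isolated; your instinct that the self-paired eigenspaces $E_{\pm 1}(B)$ require special care was exactly right---only the resolution is a counterexample rather than a patch.
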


\begin{proof}
Write $G$ for $\SU(n), \mathrm{Sp}(n)$ or $\U(n)$ and fix a finite set $S\subseteq G$. Let $A \in C_G(S)$. To prove the proposition, we will connect $A$ by a path in $C_G(S)$ to an element of $Z(G)$. Denote by $T \subseteq G$ the standard maximal torus, so $T$ consists of the diagonal matrices with complex entries (we are viewing $\mathrm{Sp}(n) \subseteq \U(2n)$ in the usual way). Then there is some $P \in G$ so that $P^{-1} AP = \text{ diag}(\lambda_1, ..., \lambda_1, \lambda_2,..., \lambda_r)$ lies in $T$, with $\lambda_i \in S^1$ appears $e_i$ times for some $e_i \geq 1$.
    If $r = 1$, then $A  =\lambda_1 I \in Z(G)$ is already central, and so we can take the constant path. Suppose $r > 1$. Since $T$ is connected we can find a path $\gamma: [0, 1] \to T$ from $(\lambda_1,\lambda_2, ..., \lambda_r)$ to $(1, 1,..., 1)$. Write the components of $\gamma$ as $\gamma = (\gamma_1,\gamma_2,  \ldots, \gamma_r)$. Then
    $$A(t) = P\text{ diag}(\gamma_1(t), \ldots, \gamma_1(t), \gamma_2(t) , \ldots, \gamma_r(t))P^{-1}$$
    defines a path $[0,1]\to G$; again, $\gamma_i(t)$ is repeated $e_i$ times. 
    By the previous lemma, $A(t) \in C_G(S)$. Since $A(1) \in Z(G)$, this is the desired path. 
\end{proof}

\subsection{Proof of Theorem \ref{thm:1}}\label{sec:Proof1}
Fix an enumeration $\V \cong \{1, \ldots, n\}$ of the vertex set, and thus identify $G^\V \cong G^n$. Let $x = (x_1, \ldots, x_n) \in \R(K, G) \subseteq G^n$. 

First we claim that for each $0 \leq r \leq n$, there is a path in $\R(K, G)$ from $x$ to $Z(G)^r \times G^{n - r}$. The case $r = 0$ is trivial. Working inductively, we assume there is a path from $x$ to some $y= (y_i)_i \in Z(G)^{r-1} \times G^{n - r + 1}$. Let $S_r = \left\{y_i \; \vert \; \{ i, r\} \in \E \right\}$ be the components of $y$ corresponding to the vertices adjacent to $r$. Thus, the $r$th component $y_r\in C_G(S_r)$ commutes with all elements of $S_r \subseteq G$. Since $G$ satisfies Property \ref{A}, there is some path $\gamma : [0,1] \to C_G(S_r)$ from $y_r$ to an element of $Z(G)$. Define
$$\eta(t) = (y_1, \ldots, y_{r-1}, \gamma(t), y_{r+1}, \ldots, y_n).
$$
Then $\eta(t) \in \R(K, G)$ since $\gamma(t)$ commutes with $y_j$ for all $j \in S_r$. As such, $ \eta$ is a path from $y$ to an element of $Z(G)^r \times G^{n-r}$. Concatenating with the path from $x$ to $y$, we get the claimed path from $x$ to $Z(G)^r \times G^{n - r}$. 

Applying the claim with $r = n$, we see that each element of $\R(K, G)$ can be connected by a path in $\R(K, G)$ to $Z(G)^n$. To finish the proof, we will show that each $z \in Z(G)^n$ can be connected by a path in $\R(K, G)$ to the trivial representation $(1, \ldots, 1)$ in $\R(K, G) \subseteq G^n$. Towards this end, fix $z = (z_1, \ldots, z_n) \in Z(G)^n$ and for $0 \leq k \leq n$ define $z^k = (z^k_1, \ldots, z^k_n) \in Z(G)^n$ to be the tuple with 
$$z^k_j = 1 \; \textrm{for $j \leq k$ \hspace{1cm} and \hspace{1cm}} z^k_j = z_j\; \textrm{for $j > k$},$$
so $z^0 = z$ and $z^n = (1, \ldots, 1)$. Since $G$ is connected, for each $0 \leq k \leq n-1$ we can find a path $\gamma_k : z^k \rightsquigarrow z^{k+1}$ that sends the $k$th component to $1 \in G$ and fixes the other
components. Since all other components are in the center of $G$, the $ k$th component 
commutes with the components of adjacent vertices, so $\gamma_k(t) \in \R(K, G)$.  
Thus the composition of the $\gamma_k$'s is a path from $z$ to $(1, \ldots, 1)$, as desired. \qed

\section{Groups without Property \ref{A}}\label{sec:GroupsWithoutA}

\subsection{Examples}

We begin with a general result.

\begin{proposition}\label{prop:SemiSimple}
    Let $G$ be a compact, connected, semisimple Lie group. If $G$ is not a product of $\SU(n)$'s and $\operatorname{Sp}(n)$'s, then $G$ does not have Property \ref{A}.
\end{proposition}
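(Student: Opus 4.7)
The plan is to proceed by case analysis based on the structure theory of compact connected semisimple Lie groups. By the classification, write $G \cong \widetilde{G}/Z'$ where $\widetilde{G} = H_1 \times \cdots \times H_k$ is a product of compact, simply connected, simple Lie groups, and $Z' \leq Z(\widetilde{G}) = \prod_i Z(H_i)$ is a finite central subgroup. The hypothesis that $G$ is not a product of $\SU(n)$'s and $\mathrm{Sp}(n)$'s then forces at least one of: (I) some $H_i$ has Dynkin type other than $A$ or $C$, i.e., type $B_n$, $D_n$, $E_6$, $E_7$, $E_8$, $F_4$, or $G_2$; or (II) every $H_i$ is $\SU(n_i)$ or $\mathrm{Sp}(n_i)$ but $Z' \neq \{1\}$, so $G$ is a proper central quotient.

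For Case (II), I would first observe that for $H = \SU(n)$ or $\mathrm{Sp}(n)$, every element of $Z(H)$ arises as a commutator $[a, b]$ with $a, b \in H$ --- via the classical ``clock-and-shift'' matrices in $\SU(n)$, or via the quaternionic generators $i, j \in \mathrm{Sp}(n)$. Hence for any nontrivial $\zeta \in Z'$, decomposing $\zeta$ componentwise yields $a, b \in \widetilde{G}$ with $[a, b] = \zeta$. Their images $\bar a, \bar b \in G$ commute; set $\bar S = \{\bar a, \bar b\}$. If a path $\gamma : [0, 1] \to C_G(\bar S)$ existed from $\bar a$ to some $z \in Z(G)$, then lifting to $\widetilde \gamma : [0, 1] \to \widetilde{G}$ with $\widetilde \gamma(0) = a$ gives a continuous map $t \mapsto [\widetilde \gamma(t), b] \in Z'$; discreteness of $Z'$ makes this constant equal to $\zeta$. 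But $Z(G) = Z(\widetilde{G})/Z'$ (any continuous map $\widetilde{G} \to Z'$ is trivial, since $\widetilde G$ is connected), so $\widetilde \gamma(1) \in Z(\widetilde{G})$ and $[\widetilde \gamma(1), b] = 1 \neq \zeta$, a contradiction. Thus Property \ref{A} fails.

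For Case (I), I would invoke the theorem of Borel--Friedman--Morgan: every simply connected compact simple Lie group $H$ of type distinct from $A$ and $C$ admits commuting triples $(x_1, x_2, x_3) \in H^3$ not contained in any maximal torus. For such triples, the centralizer $C_H(\{x_1, x_2, x_3\})$ has rank strictly less than $H$, and the inclusion $Z(H) \hookrightarrow C_H(\{x_1, x_2, x_3\})$ is not $\pi_0$-surjective --- for instance, in $H = G_2$ one gets a $(\bb{Z}/2)^3$ centralizer against the trivial $Z(G_2)$. Embedding such a triple into the $H_i$-factor of $\widetilde{G}$ (identities in the other factors) and projecting to $G$, the resulting $\bar S \subseteq G$ has $C_G(\bar S)$ inheriting the extra $\pi_0$-components from the $H_i$-centralizer. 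These components persist under the central quotient $\widetilde{G} \to G$ (the $Z'$-action is trivial on the $H_i$-factor whenever $Z' \cap Z(H_i) = \{1\}$, and only causes controlled identifications otherwise) and are not reached by $Z(G) = Z(\widetilde{G})/Z'$, whose $H_i$-projection lands inside $Z(H_i)$.

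The main obstacle is Case (I), which rests on the nontrivial Borel--Friedman--Morgan classification of non-toral commuting triples, and requires careful bookkeeping of how $\pi_0\!\left(C_{H_i}(\{x_1, x_2, x_3\})\right)$ transforms under the central quotient and how $Z(G)$ sits inside $C_G(\bar S)$. A type-by-type alternative to Borel--Friedman--Morgan is possible but considerably longer. Case (II), by contrast, is essentially self-contained, depending only on a lifting argument through the universal cover.
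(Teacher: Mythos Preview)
Your route differs substantially from the paper's. The paper argues in three lines by contraposition through Theorem~\ref{thm:1}: if $G$ had Property~\ref{A}, then $\R(\bb{Z}^3,G)$ would be connected (take $K$ the triangle, so $\Gamma_K\cong\bb{Z}^3$), but Florentino--Lawton \cite{FL} show that for compact connected semisimple $G$ not a product of $\SU$'s and $\mathrm{Sp}$'s the character variety $\mathcal{X}(\bb{Z}^3,G)$---and hence $\R(\bb{Z}^3,G)$---is disconnected, via passage to the complexification $G_{\bb C}$ and their Theorem~5.10. All the Lie theory is thereby outsourced to \cite{FL}, which itself ultimately rests on the very Borel--Friedman--Morgan classification you invoke directly. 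The paper's route buys brevity; yours would buy self-containment modulo BFM.

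Your Case~(II) is correct and is essentially what the paper later isolates as Proposition~\ref{prop:NotPropA}. Case~(I) has a real gap exactly where you flag it. First, it is cleaner to take $S=\{x_1,x_2\}$ and use $x_3$ as the witness: since $H_i$ is simply connected, the commuting pair $(x_1,x_2)$ lies in a maximal torus $T\subset C_{H_i}(S)^0$, which forces $Z(H_i)\subset T\subset C_{H_i}(S)^0$, while non-torality of the triple forces $x_3\notin C_{H_i}(S)^0$; this cleanly exhibits the failure of Property~\ref{A} for $H_i$. Second, the descent to $G=\widetilde G/Z'$: the preimage of $C_G(\bar S)$ in $\widetilde G$ is the set of $h$ with $[h,\tilde s]\in Z'$ for all $\tilde s\in\tilde S$, and this equals $\prod_{j\neq i}H_j\times C_{H_i}(S)$ only when $Z'\cap H_i=\{1\}$. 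In that case the $Z'$-action on $\pi_0$ is trivial (translation by any $z_i\in Z(H_i)\subset C_{H_i}(S)^0$ preserves components) and the argument completes. When $Z'\cap H_i\neq\{1\}$, your ``controlled identifications'' are not actually controlled---but you can instead pick a nontrivial $\zeta\in Z'\cap H_i$, note it is a single commutator in $H_i$ by Goto's theorem, and run your Case~(II) lifting argument verbatim.
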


\begin{proof}
    Let $G_{\mathbb{C}}$ be the complexification of $G$. Then $G_{\mathbb{C}}$ is a connected complex Lie group (hence a complex algebraic group) that is not a product of $\SL(n,\mathbb{C})$'s and $\operatorname{Sp}(n,\mathbb{C})$'s. Since $G$ is semisimple, so too is $G_{\mathbb{C}}$ and so \cite[Thm. 5.10]{FL} implies the $G_{\mathbb{C}}$-character variety $\mathcal{X}(\mathbb{Z}^3,G_{\mathbb{C}})$ is not connected. This implies the $G$-character variety $\mathcal{X}(\mathbb{Z}^3,G)=\mathcal{R}(\mathbb{Z}^3,G)/G$ is not connected \cite[Thm. 1.1]{FL}. Hence, the representation variety $\mathcal{R}(\mathbb{Z}^r,G)$ is not connected and so the proposition follows from Theorem \ref{thm:1}.
\end{proof}

\begin{example}
    If $G= \SO(n)$ for $n \geq 3$, then $G$ does not have Property \ref{A}.  
\end{example}

\begin{example}
    If $G= \Spin(n)$ for $n \geq 3$, then $G$ does not have Property \ref{A}.  
\end{example}

\begin{example}
    If $G$ is a compact exceptional Lie group, then $G$ does not have Property \ref{A}.  
\end{example}

\begin{example}
    If $G$ is a compact, connected, semisimple group that is not simply-connected, then $G$ does not have Property \ref{A}.  
\end{example}

The next proposition can be used to analyze Lie groups beyond the scope of Proposition \ref{prop:SemiSimple}. 

\begin{proposition}\label{prop:NotPropA}
Let $H$ be a Lie group and assume there is a central element $c \in Z(H)$ that (i) is not in the identity component of $Z(H)$, and (ii) lies in the derived subgroup $[H, H]$. If $L$ is any Lie subgroup of $Z(H)$ containing $c$, then the quotient $H/L$ does not have Property \ref{A}. 
\end{proposition}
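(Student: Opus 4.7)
The plan is proof by contradiction: assuming $G = H/L$ has Property \ref{A}, I will derive that $c \in Z(H)_0$, contradicting hypothesis (i). The key tool is Theorem \ref{thm:1}, which under Property \ref{A} forces RAAG representation varieties to be connected, providing a homotopy to exploit.

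Since $c \in [H,H]$, write $c = \prod_{i=1}^{k}[a_i, b_i]$ for some $a_i, b_i \in H$. The cleanest case is $k = 1$, so $c = [a,b]$; I would first handle this, noting that Goto's theorem (every element of a compact connected semisimple Lie group is a single commutator) already makes this cover all the examples of interest in the paper. In this case, $[a,b] = c \in L$ means the projections $\bar{a}, \bar{b} \in G$ commute, so the pair $(\bar{a}, \bar{b})$ lies in the representation variety $\R(K, G)$ for $K$ the single-edge graph.

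Assuming $G$ is connected (a mild reduction obtained by passing to the identity component if needed), Property \ref{A} and Theorem \ref{thm:1} imply that $\R(K, G)$ is path-connected, so there is a continuous path $(\alpha, \beta): [0,1] \to \R(K, G)$ from $(\bar{a}, \bar{b})$ to $(1, 1)$. I would then lift this through the principal $L$-bundle $H \to G$ to $\tilde{\alpha}, \tilde{\beta}: [0,1] \to H$ with $\tilde{\alpha}(0) = a$, $\tilde{\beta}(0) = b$, and $\tilde{\alpha}(1), \tilde{\beta}(1) \in L$. The map $\phi(t) = [\tilde{\alpha}(t), \tilde{\beta}(t)]$ lands in $L$ at every $t$ (because $\alpha(t)$ and $\beta(t)$ commute in $G$), hence is a continuous path in $L$ with $\phi(0) = c$ and $\phi(1) = [\tilde{\alpha}(1), \tilde{\beta}(1)] = 1$, where the last equality uses that both endpoints lie in the central subgroup $L$. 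Thus $c \in L_0 \subseteq Z(H)_0$, the desired contradiction.

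The main obstacle will be extending to multi-commutators ($k \geq 2$): the tuple $(\bar{a}_i, \bar{b}_i)_i \in G^{2k}$ only satisfies the single product relation $\prod_i[\bar{a}_i, \bar{b}_i] = 1$ in $G$, not the individual commutation relations, so it does not live in the RAAG variety for $k$ disjoint edges. Since RAAG relations are individual commutators only, no graph encodes a product-of-commutators relation cleanly. To bypass this, one would either need connectedness of the surface-group variety $\R(\pi_1(\Sigma_k), G)$ (which is not among the conclusions of Theorem \ref{thm:1}), or reduce to the $k = 1$ case via a structural theorem such as Goto's.
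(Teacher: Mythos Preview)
Your argument is correct in the single-commutator case and runs parallel to the paper's, but the paper takes a shorter route. Rather than invoking Theorem \ref{thm:1}, the paper appeals to Property \ref{A} directly: writing $c = [\widetilde{g}, \widetilde{h}]$, the image $h$ lies in $C_G(g)$, and Property \ref{A} (applied to the singleton $S = \{g\}$) yields a path $\gamma$ in $C_G(g)$ from $h$ to an element of $Z(G)$. Lifting $\gamma$ alone to $\widetilde{\gamma}$ in $H$ gives the path $t \mapsto [\widetilde{g}, \widetilde{\gamma}(t)]$ in $L \subseteq Z(H)$ from $c$ to the identity. Only one variable moves, and neither Theorem \ref{thm:1} nor connectedness of $G$ is needed. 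Your ``mild reduction'' to the identity component is not obviously harmless: Property \ref{A} need not descend to $G_0$, and the images $\bar a, \bar b$ need not lie there. On the other hand, your route has the advantage that the endpoint commutator is manifestly $1$ (both lifts land in the abelian group $L$), whereas the paper must argue that $[\widetilde{g}, \widetilde{\gamma}(1)] = 1$.

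Your concern about the case $k \geq 2$ is well taken, and you should note that the paper's own proof has exactly the same limitation: its opening line ``we can find $g, h \in H/L$ having lifts $\widetilde g, \widetilde h \in H$ with $[\widetilde g, \widetilde h] = c$'' already assumes $c$ is a single commutator. For the applications in the paper ($H = \Spin(n)$, $\SL(n,\bb{C})$, etc.), this is harmless by Goto's theorem, as you observe. So your worry about needing connectedness of a surface-group representation variety is a genuine obstruction to a fully general proof, but it is not a gap \emph{relative to the paper}.
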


\begin{proof}
By assumption on $c$, we can find $g, h \in H/L$ having lifts $\widetilde{g}, \widetilde{h} \in H$ with commutator $[\widetilde{g},\widetilde{h}] = c $. Then $g$ and $h$ commute in $H/L$ and so $h \in C_{H/L}(g)$ lies in the centralizer of $g$. We will show that $h$ cannot be connected by a path in $C_{H/L}(g)$ to an element of the center of $H/L$. Indeed, if $\gamma: [0,1] \rightarrow C_{H/L}(g)$ were such a path, then we could lift it to a path $\widetilde{\gamma}:[0, 1] \rightarrow H$ with 
\begin{itemize}
\item $\widetilde{\gamma}(0) = \widetilde{h}$, 
\item $\widetilde{\gamma}(1) \in Z(H)$, and
\item $[\widetilde{g}, \widetilde{\gamma}(t)] \in L$, for all $t \in [0, 1]$. 
\end{itemize}
Then $t \mapsto [\widetilde{g}, \widetilde{\gamma}(t)]$ would be a path in $Z(H)$ from $c$ to the identity, which is contrary to our assumption on $c$. 
\end{proof}

\begin{example}
Here we use Proposition \ref{prop:NotPropA} to give a second proof that $\SO(n)$ does not have Property \ref{A} for $n \geq 3$. Take $H = \Spin(n)$. The center of $\Spin(n)$ is discrete and contains a preferred subgroup $L \cong \bb{Z}_2$ with
$$\Spin(n) / L \cong \SO(n).$$
Let $c \in L$ be the non-trivial element of $L$; this is not in the identity component of $Z(\Spin(n))$. Being a simple group, $\Spin(n)$ is perfect and so it equals its derived subgroup; thus, $c \in [\Spin(n), \Spin(n)]$. It follows from Proposition \ref{prop:NotPropA} that $\SO(n)$ does not have Property \ref{A}. 
\end{example}

\begin{example}
Essentially the same argument shows that the projective linear group 
$$ \PSL(n, \bb{C}) = \SL(n, \mathbb{C}) / Z(\SL(n, \mathbb{C})) $$ 
does not have Property \ref{A}, provided $n \geq 2$. The key features are that (i) the center $Z(\SL(n, \mathbb{C})) \cong \mathbb{Z}_n$ is not connected, and (ii) $\SL(n, \mathbb{C})$ is perfect. 
\end{example}


\subsection{The obstruction map}

Throughout this section we assume that $G$ is a connected topological group that admits a universal cover $\widetilde{G}$, in the following sense: $\widetilde{G}$ is a topological group that is simply-connected and equipped with a covering map $\widetilde{G} \rightarrow G$ satisfying the universal property for universal covering spaces. By the general theory for covering spaces, if $G$ is locally path-connected and semilocally simply-connected, then $G$ admits such a cover. All Lie groups, being locally modeled on $\bb{R}^n$, admit universal covers. 

Fix a finite simple graph $K$ and consider the \emph{commutator map}
$$\mu_{K, G} : G^\V \longrightarrow G^\E, \hspace{1cm} (x_v)_{v \in \V } \longmapsto ([x_{v(e)}, x_{w(e)}] )_{e \in \E}.$$
Technically, this depends on a choice of orientation for each edge, which we assume has been made; we will write $v(e)$ for the initial vertex of $e$ and $w(e)$ for the terminal vertex. Our main results are not sensitive to how exactly this choice was made: When $G = \SO(3)$ all elements of $\pi_1(G)$ are involutions, in which case the map $\mu$ is independent of the choice of edge orientations. (From a group-theoretic standpoint, this choice of edge-orientation amounts to pinning down the relations in our presentation for $\Gamma_K$: do we want our presentation to have $[v(e), w(e)]$ or $[w(e), v(e)]$?)

Note that the representation variety $\R(K , G) = \mu_{K, G}^{-1}({1}_G)$ is the inverse image, where ${1}_G : \E \rightarrow G$ sends everything to the identity in $G$.

\begin{lemma}\label{lem:WellDefined}
    The value of $\mu_{K, \widetilde{G}}(\widetilde{x})$ is independent of choice of lift $\widetilde{x}$ of $x$.
\end{lemma}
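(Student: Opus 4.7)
The plan is to exploit two standard facts about universal covers of connected topological groups: (i) the covering projection $p:\widetilde{G}\to G$ is a group homomorphism whose kernel is a discrete, normal subgroup of $\widetilde{G}$, and (ii) any discrete normal subgroup of a connected topological group lies in its center. Together these say $\ker p \subseteq Z(\widetilde{G})$.

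With that in hand, the proof is a direct calculation. First I would fix two lifts $\widetilde{x},\widetilde{x}':\V\to\widetilde{G}$ of the same $x\in G^\V$, and observe that for each vertex $v$ there is an element $z_v\in\ker p$ with $\widetilde{x}'_v=z_v\widetilde{x}_v$. Next, for any edge $e\in\E$ with endpoints $v=v(e)$ and $w=w(e)$, I would expand the commutator $[\widetilde{x}'_v,\widetilde{x}'_w]=[z_v\widetilde{x}_v,z_w\widetilde{x}_w]$. Because $z_v,z_w$ are central in $\widetilde{G}$, they can be slid past every factor in the commutator word and then paired with their inverses; the $z$'s cancel out entirely and one is left with $[\widetilde{x}_v,\widetilde{x}_w]$. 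Applying this edge by edge gives $\mu_{K,\widetilde{G}}(\widetilde{x}')=\mu_{K,\widetilde{G}}(\widetilde{x})$.

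There is essentially no real obstacle here; the only conceptual point to flag (and possibly to include a one-line justification for) is the centrality of $\ker p$, since the lemma is asserted in the generality of connected topological groups admitting universal covers rather than only Lie groups. Beyond that, the verification is a two-line manipulation of commutators with central factors, so I would write the proof compactly without belaboring the arithmetic.
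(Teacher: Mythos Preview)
Your proposal is correct and follows essentially the same approach as the paper: both write the second lift as the first lift times elements of $\ker p = \pi_1(G)$, invoke centrality of $\pi_1(G)$ in $\widetilde{G}$, and observe that central factors cancel in a commutator. Your version is slightly more explicit in justifying why $\ker p$ is central (via the discrete-normal-subgroup argument), whereas the paper simply asserts $\pi_1(G)\subseteq Z(\widetilde{G})$; otherwise the arguments are the same.
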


\begin{proof}
    Let $\tilde{x}$, $\tilde{x}'$ be distinct lifts of $x$. Then for all $v$, there is some $c_v \in \pi_1(G)$ so that $\tilde{x}'_v = \tilde{x}_v c_v$.
    Let $e \in \E$ be an edge. Then since $\pi_1(G) \subseteq Z(\tilde{G})$ consists of central elements, we have
    \begin{align*}
        \mu_{\pi, \tilde{G}}(\tilde{x}')_e &= [\tilde{x}'_{v(e)}, \tilde{x}'_{w(e)}] = [\tilde{x}_{v(e)}, \tilde{x}_{w(e)}] = \mu_{\pi,\tilde{G}}(\tilde{x})_e.
    \end{align*} 
\end{proof}

Due to the lemma, we can define a map $\overline{\mu}: \G^\V \rightarrow \widetilde{G}^\E$ by $\overline{\mu}(x) = \mu_{K,\tilde{G}}(\tilde{x})$ for any lift $\tilde{x}$ of $x$. This fits into a commutative diagram:

\[
\begin{tikzcd}
	\tilde{G}^\V & \tilde{G}^\E \\
	G^\V & G^\E
	\arrow["{\mu_{K,\tilde{G}}}", from=1-1, to=1-2]
	\arrow[from=1-1, to=2-1]
	\arrow[from=1-2, to=2-2]
	\arrow["{\overline{\mu}}", dashed, from=2-1, to=1-2]
	\arrow["{\mu_{K,G}}", from=2-1, to=2-2]
\end{tikzcd}
\]

\begin{lemma}
    The map $\overline{\mu}$ is continuous.
\end{lemma}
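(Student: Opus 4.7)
The plan is to reduce continuity of $\overline{\mu}$ to a local statement and then invoke the fact that the covering map $p:\widetilde{G}\to G$ has continuous local sections. Concretely, I would argue as follows.

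First, I would observe that the product map $p^\V:\widetilde{G}^\V\to G^\V$ (i.e., $p$ applied coordinatewise) is itself a covering map, and in particular a local homeomorphism. Hence for every $x\in G^\V$ there is an open neighborhood $U_x\subseteq G^\V$ of $x$ together with a continuous section $s_x:U_x\to\widetilde{G}^\V$ of $p^\V$, so that $s_x(y)$ is a lift of $y$ for every $y\in U_x$. This is the one input from covering space theory; everything else is formal.

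Next, on each such $U_x$, the previous lemma (independence of $\overline{\mu}$ from the chosen lift) gives
$$\overline{\mu}\bigl|_{U_x} \;=\; \mu_{K,\widetilde{G}}\circ s_x.$$
Since $\mu_{K,\widetilde{G}}$ is continuous (being built coordinatewise out of multiplication and inversion in $\widetilde{G}$) and $s_x$ is continuous by construction, the right-hand side is continuous on $U_x$. Continuity on each element of an open cover $\{U_x\}_{x\in G^\V}$ implies continuity on all of $G^\V$, which is what we want.

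I do not expect any real obstacle here; the only subtlety is remembering that although a global continuous lift $G^\V\to\widetilde{G}^\V$ typically does not exist (that is exactly what $\pi_1(G)$ obstructs), we only need continuous \emph{local} lifts, and these are automatic from the covering-map property. The well-definedness lemma is what lets the locally defined continuous functions $\mu_{K,\widetilde{G}}\circ s_x$ agree on overlaps and hence define a single continuous map $\overline{\mu}$.
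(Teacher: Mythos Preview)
Your proposal is correct and follows essentially the same argument as the paper: take an evenly covered neighborhood, use a continuous local section $s$ of the covering $\widetilde{G}^\V\to G^\V$, write $\overline{\mu}$ locally as $\mu_{K,\widetilde{G}}\circ s$, and conclude by patching over the open cover. Your write-up is slightly more detailed (you make explicit why $\mu_{K,\widetilde{G}}$ is continuous and why the local pieces agree on overlaps), but the method is the same.
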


\begin{proof}
    Consider an evenly covered
    neighborhood $U$, so that there is a continuous section
    $s : U \to \tilde{G}^\V$. Then $\overline{\mu}|_{U}$ is
    the composition $\mu_{\tilde{G}} \circ s$, so $\overline{\mu}|_U$ is continuous. Since the evenly
    covered neighborhoods of $G^\V$ cover $G^\V$, $\overline{\mu}$ is continuous.
\end{proof}

Let $x \in \R(K, G)$. Then $\mu_{K, G}(x)$ maps to the identity in $G^\E$. By the exactness of $\pi_1(G) \to \tilde{G} \to G$, it follows that $\mu_{K, G}(x)$ lifts to an element of $\pi_1(G)$. That is, if $x \in \R(K, G)$, then $\overline{\mu}(x) \in \pi_1(G)^\E$, and so the following is well-defined. 

\begin{definition}
The map $o_K : \R(K, G) \to \pi_1(G)^\E$ defined by $x \mapsto \overline{\mu}(x)$ is the \emph{obstruction map}.
\end{definition}

The following can be viewed as justifying the use of the word ``obstruction'' here.

\begin{proposition}\label{prop:HomExtension}
Let $x \in \R(\pi, G)$. Then $o_K(x) = 1_{\widetilde{G}}$ if and only if $\widetilde{x} : \V \rightarrow \widetilde{G}$ extends to a group homomorphism $\widetilde{x} \in \R(\pi, \widetilde{G})$.
\end{proposition}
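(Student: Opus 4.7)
The plan is to unpack what $o_K(x) = 1_{\widetilde{G}}$ says componentwise and match it against the defining relations of the presentation of $\Gamma_K$. Recall that $\Gamma_K$ is presented by generators $\{x_v : v \in \V\}$ subject only to the relations $[x_{v(e)}, x_{w(e)}] = 1$ for each $e \in \E$. By the universal property of such presentations, a set map $\widetilde{x} : \V \to \widetilde{G}$ extends to a group homomorphism $\Gamma_K \to \widetilde{G}$ if and only if $[\widetilde{x}_{v(e)}, \widetilde{x}_{w(e)}] = 1_{\widetilde{G}}$ for every $e \in \E$. So the proof is largely a matter of quoting definitions correctly.

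First I would fix a lift $\widetilde{x} : \V \to \widetilde{G}$ of $x$, which exists because the covering map $\widetilde{G} \to G$ is surjective. By construction of $\overline{\mu}$ and the definition of $o_K$, one has
\[
o_K(x) \;=\; \overline{\mu}(x) \;=\; \mu_{K,\widetilde{G}}(\widetilde{x}) \;=\; \bigl([\widetilde{x}_{v(e)},\widetilde{x}_{w(e)}]\bigr)_{e \in \E} \in \widetilde{G}^{\E},
\]
and this value is independent of the choice of lift by Lemma \ref{lem:WellDefined}. The condition $o_K(x) = 1_{\widetilde{G}}$ is therefore the assertion that each commutator $[\widetilde{x}_{v(e)},\widetilde{x}_{w(e)}]$ equals the identity in $\widetilde{G}$.

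For the forward direction, if $o_K(x) = 1_{\widetilde{G}}$, then the set map $v \mapsto \widetilde{x}_v$ sends every defining relation of $\Gamma_K$ to the identity, so by the universal property it extends uniquely to a homomorphism $\Gamma_K \to \widetilde{G}$; this extension is an element of $\R(K,\widetilde{G})$ lifting $x$ in the required sense. Conversely, if some lift $\widetilde{x}$ extends to a homomorphism $\widetilde{x} \in \R(K,\widetilde{G})$, then that homomorphism must send each commutator relation to the identity, whence $[\widetilde{x}_{v(e)},\widetilde{x}_{w(e)}] = 1_{\widetilde{G}}$ for every edge and hence $o_K(x) = 1_{\widetilde{G}}$. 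Since $o_K(x)$ does not depend on the choice of lift, this equivalence holds for every lift simultaneously.

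There is no real obstacle here beyond bookkeeping: the content lies entirely in the observation that the relations in the RAAG presentation are precisely the commutators computed by $\mu_{K,\widetilde{G}}$, so $o_K$ is exactly the obstruction to lifting $x$ through the covering $\widetilde{G} \to G$ at the level of representations. The only point requiring care is to invoke Lemma \ref{lem:WellDefined} so that the statement is unambiguous in the choice of lift.
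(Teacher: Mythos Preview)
Your proof is correct and follows exactly the same approach as the paper: both observe that a set map $\widetilde{x}:\V\to\widetilde{G}$ extends to a homomorphism of $\Gamma_K$ if and only if the images of adjacent vertices commute, which is precisely the componentwise meaning of $o_K(x)=1_{\widetilde{G}}$. Your version is simply more explicit about invoking the universal property of the presentation and Lemma~\ref{lem:WellDefined}, whereas the paper compresses this into a single sentence.
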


\begin{proof}
The map $\widetilde{x}$ extends to a group homomorphism if and only if $\widetilde{x}(v(e))$ and $\widetilde{x}(w(e))$ commute for all $e \in \E$, which is the meaning of $o_K(x) = 1_{\widetilde{G}}$. 
\end{proof}

 The obstruction map $o_K$ is continuous and locally constant. As such, it descends to a map 
 $$\pi_0(\R(K, G)) \longrightarrow \pi_1(G)^\E.$$ 
 If this map on $\pi_0$ is injective, then $\R(K, G)$ has at most $\vert \pi_1(G) \vert^{\vert \E\vert}$ connected components; if it is surjective, then $\R(K, G)$ has at least $\vert \pi_1(G) \vert^{\vert \E\vert}$ connected components.

\begin{remark}\label{rem:ObExtension}
Let $\pi = \langle S \vert R \rangle$ be a finitely-presented group with generating set $S$ and relations $R$. Suppose all words making up $R$ happen to be homogeneous of degree zero in all variables (an example of this is the commutator $[a, b] = a b a^{-1} b^{-1}$). Then the same construction given above extends to give an ``obstruction map'' $o_\pi: \R(\pi, G) \rightarrow \pi_1(G)^{R}$; the homogeneity condition is needed to extend the proof of Lemma \ref{lem:WellDefined} to this situation. This too has the property that $x \in \R(\pi, G)$ lifts to an element $\widetilde{x} \in \R(\pi, \widetilde{G})$ if and only if $o_\pi(x) = 1_{\widetilde{G}}$. However, not every group $\pi$ has a presentation admitting only degree-zero homogeneous relations, and for such a group a different approach is needed to detect the when elements of $\R(\pi, G)$ lift; we give such an alternative approach in Section \ref{sec:ASW}. 
\end{remark}

The following will help us give a graph-theoretic interpretation of the obstruction map.

\begin{definition}
     A \emph{$\pi_1(G)$-edge marking} of a graph $K = (\V, \E)$ is any map $\Lambda \in \pi_1(G)^\E$ from $\E$ to $\pi_1(G)$. A \emph{$\pi_1(G)$-marked graph} is a pair $(K, \Lambda)$ where $K$ is a graph and $\Lambda$ is a $\pi_1(G)$-edge marking of $K$. For $c \in \pi_1(G)$, an edge $e$ is called $c$-\emph{marked} if $\Lambda_{e} = c$. 
 \end{definition}
\noindent When $G$ is clear from context, we drop ``$\pi_1(G)$'' from the notation. 

If $x\in \R(K, G)$, then $o_K(x)$ is an edge marking of $K$. We can thus categorize elements of $\R(K, G)$ in terms of edge markings. Indeed, if $\Lambda$ is an edge marking, then $o_K^{-1}(\Lambda)$ is either empty or a union of connected components of $\R(K, G)$. It is through the (disjoint) union 
\begin{equation}\label{eq:ODecomp}
\R(K, G) = \bigcup_{\Lambda \in \pi_1(G)^\E} o_K^{-1}(\Lambda)
\end{equation}
that we will analyze the components of $\R(K, G)$.

\begin{proposition}[Naturality of $o$]\label{prop:natty}
    Let $f : K' \to K$ be a graph homomorphism. 
    Then the following diagram commutes:
\[\begin{tikzcd}
	{\R(K, G)} & {\pi_1(G)^{\E}} \\
	{\R(K', G)} & {\pi_1(G)^{\E'}}
	\arrow["{o_{K}}", from=1-1, to=1-2]
	\arrow["{f^*}", from=1-1, to=2-1]
	\arrow["{f^*}", from=1-2, to=2-2]
	\arrow["{o_{K'}}", from=2-1, to=2-2]
\end{tikzcd}\]
\end{proposition}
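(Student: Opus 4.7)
The plan is to chase definitions around the diagram and check the two sides agree pointwise on each edge $e' \in \E'$. First I would record the two pullback maps. Since $f$ is a graph homomorphism, it sends each edge of $K'$ to an edge of $K$, so on edge markings the pullback is simply precomposition: $(f^*\Lambda)_{e'} = \Lambda_{f(e')}$. On representations, $f^*$ sends $x = (x_v)_{v \in \V} \in \R(K,G)$ to the tuple $(x_{f(v')})_{v' \in \V'}$, and I would first check this lies in $\R(K',G)$: for each $e' \in \E'$ the endpoints of $f(e')$ are $f(v(e'))$ and $f(w(e'))$ (up to labeling), and the required commutativity of $x_{f(v(e'))}$ and $x_{f(w(e'))}$ is one of the defining relations for $\R(K,G)$.

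Next I would compute both compositions on an arbitrary $x \in \R(K,G)$. Fix any lift $\widetilde{x} : \V \to \widetilde{G}$ of $x$; then $\widetilde{x} \circ f : \V' \to \widetilde{G}$ is a lift of $f^*x$, and by Lemma \ref{lem:WellDefined} I may use it to evaluate the obstruction. This yields
\[
o_{K'}(f^*x)_{e'} = [\widetilde{x}_{f(v(e'))}, \widetilde{x}_{f(w(e'))}]
\qquad\text{while}\qquad
(f^* o_K(x))_{e'} = o_K(x)_{f(e')} = [\widetilde{x}_{v(f(e'))}, \widetilde{x}_{w(f(e'))}].
\]

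The last step is to recognize these two expressions as equal. Under the standing convention that $f$ respects the chosen edge orientations, so that $v(f(e')) = f(v(e'))$ and $w(f(e')) = f(w(e'))$, equality holds on the nose and the diagram commutes. I expect the only real subtlety to be this orientation bookkeeping: should $f$ reverse the orientation of some edge $e'$, the two commutators would differ by the operation $[a,b] \mapsto [b,a] = [a,b]^{-1}$, which is trivial exactly when every element of $\pi_1(G)$ is an involution. For the paper's principal case $G = \SO(3)$ the involution condition is automatic (as noted when $\overline{\mu}$ was introduced), so naturality holds irrespective of how the orientations on $K$ and $K'$ relate under $f$.
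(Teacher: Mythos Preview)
Your proof is correct and follows essentially the same approach as the paper: fix an edge $e' \in \E'$, pick a lift $\widetilde{x}$ of $x$, and compare the two commutators directly. The paper's proof is just the single chain of equalities
\[
\big(f^*o_{K}(x)\big)_{e'} = o_{K}(x)_{f(e')} = [\widetilde{x}_{f(v)}, \widetilde{x}_{f(w)}] = [(\widetilde{f^*x})_{v}, (\widetilde{f^*x})_{w}] = \big(o_{K'}(f^*x)\big)_{e'},
\]
which is exactly your computation; your added discussion of the orientation bookkeeping is more explicit than the paper's, which silently relies on the earlier remark that the obstruction map is orientation-insensitive in the cases of interest.
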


\begin{proof}
    Let $e \in \E'$. Setting $v = v(e)$ and $w = w(e)$, for $x \in \R(K, G)$ we have
    \begin{align*}
        \big(f^*o_{K}(x)\big)_{e} = o_{K}(x)_{f(e)} = [\widetilde{x}_{f(v)}, \widetilde{x}_{f(w)}] = [(\widetilde{f^*x})_{v}, (\widetilde{f^*x})_{w}]= \big(o_{K'}(f^*x)\big)_{e}.
    \end{align*}
\end{proof}

 \begin{corollary}[Functoriality of $o^{-1}$]\label{cor:Funct}
     Let $f: K' \rightarrow K$ be a graph homomorphism and $\Lambda \in \pi_1(G)^{\E}$ an edge marking. Then the map $f^* : \R(K, G) \to \R(K', G)$ restricts to a well-defined map
     $f^* : o_K^{-1}(\Lambda) \to o_{K'}^{-1}(f^*\Lambda)$.
 \end{corollary}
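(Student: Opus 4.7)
My plan is to derive this directly from the naturality of $o$ established in Proposition \ref{prop:natty}. The statement has two parts that require attention: first, that the pullback $f^*$ genuinely induces a map $\R(K, G) \to \R(K', G)$; and second, that this map respects the fibers of the obstruction map in the claimed way.

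For the first point, I would unpack the pullback in coordinates: viewing representations as tuples via the embedding (\ref{eq:repemb}), the map $f^*$ sends $(x_v)_{v \in \V} \in \R(K, G)$ to $(x_{f(v)})_{v \in \V'}$. To check that this tuple lies in $\R(K', G)$, take any edge $\{v', w'\} \in \E'$. Since $f$ is a graph homomorphism, either $f(v') = f(w')$, in which case the two components coincide and trivially commute, or $\{f(v'), f(w')\} \in \E$, in which case the two components commute because $x \in \R(K, G)$.

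For the second point, I would just chase the commutative square from Proposition \ref{prop:natty}. If $x \in o_K^{-1}(\Lambda)$, then $o_K(x) = \Lambda$, and naturality gives
\[
o_{K'}(f^*x) = f^*\bigl(o_K(x)\bigr) = f^*\Lambda,
\]
so $f^*x \in o_{K'}^{-1}(f^*\Lambda)$, as required.

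There is no serious obstacle here; the entire corollary is essentially the fiberwise content of naturality, with the only bookkeeping being that $f^*\Lambda \in \pi_1(G)^{\E'}$ denotes the marking $e' \mapsto \Lambda_{f(e')}$, which is precisely the pullback used in Proposition \ref{prop:natty}.
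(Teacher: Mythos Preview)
Your argument is correct and matches the paper's approach: the paper states this corollary immediately after Proposition \ref{prop:natty} with no proof, treating it as the evident fiberwise consequence of naturality, which is exactly what your diagram chase does. Your extra verification that $f^*$ lands in $\R(K',G)$ is a helpful bit of bookkeeping the paper leaves implicit; note that under the usual convention for simple-graph homomorphisms, edges map to edges (so $f(v')\neq f(w')$ automatically), making your first sub-case vacuous but harmless.
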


\subsection{Bundles and flat connections}\label{sec:Bundles}

Let $G$ be a compact Lie group and suppose $\pi = \langle R \vert S \rangle$ is finitely-presented. Here we give a topological interpretation of $\R(\pi, G)$. 

Let $X$ be any manifold having fundamental group $\pi$. By a result of Dehn \cite{Dehn} such a manifold always exists. 
Suppose $P \rightarrow X$ is a principal $G$-bundle, and write $\A_{\mathrm{flat}}(P)$ for the space of flat connections on $P$; see \cite{KN} for a general overview of these matters. Fix a basepoint $x_0 \in X$ and denote by $\G_0(P)$ the group of bundle isomorphisms of $P$ covering the identity on $X$ and that act as the identity on the fiber over $x_0$. This fits into a short exact sequence
\begin{equation}\label{eq:Gauge}
1 \longrightarrow \G_0(P) \longrightarrow \G(P) {\longrightarrow} G \longrightarrow 1
\end{equation}
where $\G(P)$ is the group of all bundle isomorphisms covering the identity. The holonomy for connections induces an embedding
\begin{equation}\label{eq:Connections0}
\iota_P: \A_{\mathrm{flat}}(P) / \G_0(P) \longhookrightarrow \R(\pi, G).
\end{equation}
Allowing $P$ to run over all isomorphism types $[P]$ of principal $G$-bundles on $X$ gives a homeomorphism
\begin{equation}\label{eq:Connections}
\R(\pi, G) \cong \bigcup_{[P] } \A_{\mathrm{flat}}(P) / \G_0(P).
\end{equation}
Indeed, if $x \in \R(\pi, G)$ is a representation, then we can construct $P$ and a flat connection $A$ with holonomy $x$ as follows: Let $\widetilde{X}$ be the universal cover of $X$, and consider the action of $\pi \cong \pi_1(X, x_0)$ on $\widetilde{X} \times G$ with $\pi_1(X, x_0)$ acting on $\widetilde{X}$ by deck transformations and on $G$ by $x$. Then the quotient
$$P = (\widetilde{X} \times G )/ \pi_1(X,x_0)$$
is naturally a principal $G$-bundle on $X$, and the trivial connection on $\widetilde{X} \times G \rightarrow \widetilde{X}$ descends to give the desired flat connection on $P$.

To simplify the discussion moving forward, we restrict attention the case where $\pi = \Gamma_K = \langle \V \vert \E \rangle$ is a RAAG. 

\begin{lemma} \label{lem:FlatCon}
Assume $G$ is connected and $X$ is a manifold with $\pi_1(X) \cong \Gamma_K$. 

\begin{enumerate}
\item[(a)] If $x \in \R(K, G)$, then there is a bundle $P_x$ so that $x$ lies in the image of (\ref{eq:Connections0}) (with $P_x$ in place of $P$). This bundle $P_x$ is unique up to bundle isomorphism. 

\item[(b)] The isomorphism type of $P_x$ depends on $x$ only through the value of $o_K(x)$. That is, for $x, y \in \R(K, G)$, the bundles $P_x$ and $P_y$ are isomorphic if and only if $o_K(x) = o_K(y)$.
\end{enumerate}
\end{lemma}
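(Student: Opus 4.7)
Plan for Lemma \ref{lem:FlatCon}.

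For part (a), existence comes directly from the construction sketched just before the lemma: I would set $P_x := (\widetilde{X} \times G)/\Gamma_K$, with $\Gamma_K \cong \pi_1(X, x_0)$ acting by deck transformations on $\widetilde{X}$ and via $x$ on $G$. The trivial connection on $\widetilde{X} \times G \to \widetilde{X}$ is $\Gamma_K$-invariant and descends to a flat connection on $P_x$ whose holonomy is $x$ by construction, placing $x$ in the image of $\iota_{P_x}$. For uniqueness I would argue via the standard flat-bundle principle: given any $(P, A)$ with holonomy $x$, pulling $A$ back to the universal cover yields a flat connection on a simply-connected base and hence trivializes. This trivialization identifies $P|_{\widetilde{X}}$ with $\widetilde{X} \times G$, and the induced $\Gamma_K$-action on the pullback bundle must act on the $G$-factor via the holonomy $x$, yielding $P \cong P_x$ after quotienting.

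For the forward direction of part (b), my plan is to realize $o_K(x)_e$ as an intrinsic invariant of $P_x$ by restricting to a sub-torus for each edge. Given $e = \{v, w\} \in \E$, the inclusion $\mathbb{Z}^2 \cong \langle x_v, x_w \rangle \hookrightarrow \Gamma_K \cong \pi_1(X)$ is realized by a continuous map $\iota_e : T^2 \to X$, unique up to homotopy. The pullback $\iota_e^* P_x$ is a flat $G$-bundle on $T^2$ with commuting holonomies $x_v, x_w$, and a direct cellular obstruction computation using the single 2-cell of $T^2$ shows that its primary characteristic class in $H^2(T^2; \pi_1(G)) \cong \pi_1(G)$ equals the commutator $[\widetilde{x}_v, \widetilde{x}_w] = o_K(x)_e$ of any lifts to $\widetilde{G}$. (This generalizes the familiar statement that $w_2$ of a flat $\SO(3)$-bundle on $T^2$ coincides with the commutator of lifts to $\Spin(3)$.) Hence any bundle isomorphism $P_x \cong P_y$ restricts to an isomorphism on each $\iota_e$-pullback and forces $o_K(x)_e = o_K(y)_e$ for every edge $e$, giving $o_K(x) = o_K(y)$.

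For the backward direction of part (b), which I expect to be the main obstacle, my plan is to factor the classifying map $X \to BG$ of $P_x$ through the Salvetti complex $Y_K$, a $K(\Gamma_K, 1)$ with one $k$-cell per $k$-clique of $K$. The 2-cells of $Y_K$ are attached via commutator words, so the cellular differential from $C_1$ to $C_2$ with coefficients in $\pi_1(G)$ vanishes, and the primary obstruction class of the $Y_K$-bundle underlying $P_x$ in $H^2(Y_K; \pi_1(G))$ is precisely $o_K(x)$ under the natural identification with a subspace of $\pi_1(G)^{\E}$. The first potentially-higher obstruction, in $H^3(Y_K; \pi_2(G))$, vanishes automatically because $\pi_2(G) = 0$ for every Lie group. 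In the setting of Theorem \ref{thm:3} and Corollary \ref{cor:4}, the manifold $X$ is 3-dimensional, so pullback along $f: X \to Y_K$ annihilates cohomology in degrees $\geq 4$ and $o_K(x) = o_K(y)$ already implies $P_x \cong P_y$. The substantive difficulty for the general-dimensional statement is controlling still-higher obstructions in $H^k(Y_K; \pi_{k-1}(G))$ for $k \geq 4$; the plan there would be to exploit that $P_x$ arises specifically from the representation $x$ and that the lifted data $\widetilde{x}_v \in \widetilde{G}$ used to construct the bundle is determined (modulo the central $\pi_1(G)$-ambiguity) by $o_K(x)$, so all of its characteristic classes factor through $o_K(x)$.
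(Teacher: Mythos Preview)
Your treatment of (a) and of the forward direction of (b) is sound and, for the forward direction, arguably cleaner than the paper's. The paper builds $P_x$ cell by cell on a CW structure for $X$, reading off $o_K(x)_e$ as the $\pi_1(G)$-valued clutching datum over the $2$-cell attached along $[\gamma_{v(e)},\gamma_{w(e)}]$; invariance of these data under bundle isomorphism is then declared ``clear from the construction.'' Your pullback to $T^2$ and identification of $o_K(x)_e$ with the primary obstruction class of $\iota_e^*P_x$ makes that invariance manifest.

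The gap is in your backward direction. Your claim that the lifts $\widetilde{x}_v$ are ``determined (modulo the central $\pi_1(G)$-ambiguity) by $o_K(x)$'' is false: $o_K(x)$ only records the commutators $[\widetilde{x}_{v(e)},\widetilde{x}_{w(e)}]$, not the individual $\widetilde{x}_v$ even up to center (already for $K$ a single vertex, $o_K$ is trivially valued while $x_v$ is arbitrary), so you cannot conclude from this that higher characteristic classes of $P_x$ factor through $o_K(x)$. The paper avoids the Salvetti complex and the tower of obstructions entirely. It works on $X$ itself and uses flatness directly: having built $P_x$ over the $2$-skeleton from the data $o_K(x)$, it extends over each $k$-cell ($k\geq 3$) by taking the attaching map $\partial\mathbb{D}^k\to G$ to be homotopically trivial, and then observes that any bundle \emph{not} of this form over the higher cells cannot carry a flat connection (parallel transport along a flat connection canonically trivializes the bundle over each simply-connected cell, forcing the trivial attaching data). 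Thus a flat bundle on $X$ is already determined up to isomorphism by its restriction to the $2$-skeleton, which in turn is determined by $o_K(x)$. This is the step you are missing; your obstruction-theoretic plan on $Y_K$ would still have to confront $H^k(Y_K;\pi_{k-1}(G))$ for $k\geq 4$, and there is no shortcut through the lifts $\widetilde{x}_v$.
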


\begin{proof}
Given $x \in \R(K, G)$, we will construct the bundle $P_x$ directly; the claim in (b) will be clear from the construction. In brief, bundles on $X$ supporting flat connections are entirely determined by their restriction to the 2-skeleton, and this is encoded by $o_K(x)$. 

In more detail, begin by declaring $P_x$ to be trivial over the 1-skeleton $X_1$ of $X$; this is our only option since $G$ is connected. Note that the 1-skeleton can be explicitly constructed from the generating set $\V$: Since $\pi_1(X) \cong \Gamma_K = \langle \V \vert \E \rangle$, for each $v \in \V$ we can find a based loop $\gamma_v$ in $X$, and these $\gamma_v$'s for $v \in \V$ give $X_1$.

The construction of $P_x$ on the 2-skeleton is predicated on the following observation. Suppose $\beta: \bb{D}^2 \rightarrow X$ is a 2-cell that restricts to the boundary to be a map $\alpha: \partial \bb{D}^2 = S^1 \rightarrow X_1$ into the 1-skeleton. The topological type of any principal $G$-bundle $P$ over this 2-cell is uniquely determined as follows: The pullback $\beta^* P$ is a bundle over $\bb{D}^2$ and so is trivializable. Any section of $\beta^* P$ restricts to the boundary of $\bb{D}^2$ to yield a map $S^1 \rightarrow G$, the homotopy type of which determines $P \vert_\beta$ uniquely, up to isomorphism. 

With this understood, let $e \in \E$ be an edge with vertices $v, w$. Since $\pi_1(X) \cong \Gamma_K$, we can find based loops $\gamma_v, \gamma_w$ in $X_1$ with homotopy class $v, w$, respectively, and with the property that $[\gamma_v, \gamma_w]$ is contractible in $X$. Thus, we can find a map $\beta_e: \bb{D}^2 \rightarrow X$ that restricts to $[\gamma_v, \gamma_w]$ on the boundary. Define $P$ on this 2-cell $\beta_e$ by using $o_K(x)_e \in \pi_1(G)$ as an attaching map. Repeat for all $e \in \E$. We claim this uniquely defines $P_x$ on the 2-skeleton of $X$. To see this, suppose $\beta$ is any other 2-cell with boundary mapping into $X_1$. Then we can write the homotopy class of its boundary as a product of elements of $\V$. Then $\beta$ itself represents some word in these elements, which is in the normal subgroup generated by the words of $\E$. To be precise, the relations are viewed as elements in the free group $Fr(\V)$ generated by $\V$, in which $\beta$ is a product of formal elements of the form $u [v, w] u^{-1}$ where $u, v, w \in Fr(\V)$ and $v, w \in \V$ are the endpoints of some edge $e$. The attaching map for $\beta$ is then given by the product of 
$$\widetilde{x}(u) [\widetilde{x}(v), \widetilde{x}(w)] \widetilde{x}(u)^{-1} = o_K(e),$$
since $ [\widetilde{x}(v), \widetilde{x}(w)] = o_K(e)$ is central in $\widetilde{G}$. 

Finally, on any cells of dimension $k \geq 3$, define $P_x$ by taking the attaching map $\partial \bb{D}^k \rightarrow G$ to be homotopically trivial; when this is the case for all $k$-cells in $X$, we will say the bundle \emph{trivializes over the $k$-cells}. This establishes the uniqueness of $P_x$ as well since if $P$ is any bundle that does not trivialize over the $k$-cells for some $k \geq 3$, then $P$ does not admit any flat connections. 
\end{proof}

We see from this construction the following: If the restriction $P \vert_{X_2}$ to the $2$-skeleton $X_2$ admits a flat connection, and if $P$ trivializes over the $k$-cells for $k \geq 3$, then $\A_{\mathrm{flat}}(P)$ is not empty. In fact, since $\pi_2(G) = 0$ for all Lie groups $G$, every principal $G$-bundle trivializes over the 3-cells. Thus, we have:

\begin{corollary}\label{cor:Surj}
Assume $X$ is a 3-manifold with $\pi_1(X)$ a RAAG. If $o_K: \R(K, G) \rightarrow \pi_1(G)^\E$ is surjective, then every principal $G$-bundle on $X$ admits a flat connection.
\end{corollary}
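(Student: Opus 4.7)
The plan is to show that, under the surjectivity hypothesis on $o_K$, every principal $G$-bundle on $X$ is isomorphic to some $P_x$ for $x \in \R(K, G)$; since $P_x$ admits a flat connection by Lemma \ref{lem:FlatCon}(a), the conclusion will then follow. By Lemma \ref{lem:FlatCon}(b) the isomorphism class of $P_x$ depends only on $o_K(x) \in \pi_1(G)^{\E}$, so surjectivity of $o_K$ means the family $\{\,[P_x] : x \in \R(K, G)\,\}$ is indexed by all of $\pi_1(G)^{\E}$.

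The main work is then to exhibit, for every principal $G$-bundle $P \to X$, an invariant $\Lambda_P \in \pi_1(G)^{\E}$ such that $P$ is determined up to isomorphism by $\Lambda_P$ and such that $\Lambda_{P_x} = o_K(x)$. I would fix the CW structure on $X$ used in the proof of Lemma \ref{lem:FlatCon}: the 1-skeleton $X_1$ is the wedge of loops $\gamma_v$ representing $v \in \V$, and each edge $e = \{v, w\} \in \E$ contributes a 2-cell $\beta_e$ attached along $[\gamma_v, \gamma_w]$. Trivialize $P|_{X_1}$ (possible since $G$ is connected) and define $\Lambda_P(e) \in \pi_1(G)$ to be the homotopy class of the resulting clutching function along $\partial \beta_e$. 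The centrality argument from the proof of Lemma \ref{lem:FlatCon} (using $\pi_1(G) \subseteq Z(\widetilde{G})$) applies to show that the clutching data on any other 2-cell of $X$ is forced to be a product of the $\Lambda_P(e_i)$, so $P|_{X_2}$ is determined up to isomorphism by $\Lambda_P$. Since $\pi_2(G) = 0$, restriction to the 2-skeleton gives a bijection on isomorphism classes of $G$-bundles over $X$ (both extension of bundles and extension of bundle isomorphisms across 3-cells are unobstructed), and so $P$ itself is determined up to isomorphism by $\Lambda_P$. Finally, the surjectivity of $o_K$ yields $x \in \R(K, G)$ with $o_K(x) = \Lambda_P$, giving $P \cong P_x$ and hence $\A_{\mathrm{flat}}(P) \neq \emptyset$.

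The main obstacle is verifying the classification step for a general bundle $P$ rather than only one of the form $P_x$. What makes this feasible is that the construction in the proof of Lemma \ref{lem:FlatCon} uses only the attaching information in $\pi_1(G)^{\E}$ together with the vanishing of $\pi_2(G)$; these ingredients are intrinsic to $X$ and $G$ and do not require $P$ to come from a representation. Rerunning that construction with $\Lambda_P$ in place of $o_K(x)$ therefore produces a bundle isomorphic to $P$, completing the argument.
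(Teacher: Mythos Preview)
Your approach mirrors the paper's: both deduce the result from the construction in Lemma~\ref{lem:FlatCon} together with $\pi_2(G)=0$, so that a $G$-bundle on a 3-manifold is governed by its restriction to the 2-skeleton. The paper records this in the sentence immediately preceding the corollary, while you spell out the clutching-function argument in more detail.

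There is, however, a real gap in the step where you claim that ``the clutching data on any other 2-cell of $X$ is forced to be a product of the $\Lambda_P(e_i)$''. The centrality argument in the proof of Lemma~\ref{lem:FlatCon} is specific to bundles of the form $P_x$: there one computes the attaching class over a 2-cell with boundary word $w$ as $\widetilde{x}(w)$, and centrality of $\pi_1(G)\subseteq Z(\widetilde{G})$ then expresses this as a product of the $o_K(x)_e$. For an \emph{arbitrary} bundle $P$ no representation is available, and a 2-cell whose attaching word is already trivial in the free group on $\V$ carries a clutching class in $\pi_1(G)$ that is entirely unconstrained by $\Lambda_P$. Concretely, take $X=S^1\times S^2$ with $K$ a single vertex: here $\E=\emptyset$ and $o_K$ is vacuously surjective, yet in the standard CW structure the unique 2-cell is attached along the constant loop, and the nontrivial $\SO(3)$-bundle (detected by $w_2\neq 0$ in $H^2(X;\bb{Z}/2)\cong\bb{Z}/2$) is nontrivial precisely over that cell. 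This bundle admits no flat connection, since every homomorphism $\bb{Z}\to\SO(3)$ lifts to $S^3$ and hence produces the trivial bundle. The paper's one-line justification glosses over the same point, so the gap is shared; some further hypothesis on $X$ (for instance that $X$ be aspherical, so that $H^2(X;\pi_1(G))$ is computed from the presentation complex of $\Gamma_K$) appears to be needed.
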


For future reference, we also encode the following consequence of Lemma \ref{lem:FlatCon}.

\begin{corollary}\label{cor:Components}
Let $X$ be any manifold with $\pi_1(X) = \Gamma_K$ a RAAG. 
\begin{enumerate}
\item[(a)] Let $\Lambda \in \pi_1(G)^\V$. If $o_K^{-1}(\Lambda)$ is nonempty, then there is a principal $G$-bundle $P \rightarrow X$, unique up to isomorphism, so that the map $\iota_{P}$ of (\ref{eq:Connections0}) induces a homeomorphism
$$\A_{\mathrm{flat}}(P)/\G_0(P) \cong o_K^{-1}(\Lambda).$$

\item[(b)] Let $P \rightarrow X$ be a principal $G$-bundle. If $\A_{\mathrm{flat}}(P)$ is not empty, then there is a unique $\Lambda \in \pi_1(G)^\E$ so that $\iota_{P}$ of (\ref{eq:Connections0}) induces a homeomorphism
$$\A_{\mathrm{flat}}(P)/\G_0(P) \cong o_K^{-1}(\Lambda).$$
\end{enumerate}
\end{corollary}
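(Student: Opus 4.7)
The plan is that both parts of the corollary should follow from Lemma \ref{lem:FlatCon} together with the decomposition (\ref{eq:Connections}) and the fact, recorded in (\ref{eq:Connections0}), that each $\iota_P$ is a topological embedding. The central point is that Lemma \ref{lem:FlatCon} effectively gives a bijection
\[ \{\text{bundle isomorphism classes } [P] \text{ with } \A_{\mathrm{flat}}(P) \neq \emptyset\} \;\longleftrightarrow\; o_K\bigl(\R(K,G)\bigr) \subseteq \pi_1(G)^{\E}, \]
so one just needs to match up the bundle-side and obstruction-side fibers and upgrade the set-level statement to a topological one.

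For part (a), pick any $x \in o_K^{-1}(\Lambda)$ (which exists by hypothesis) and let $P \defeq P_x$ be the bundle supplied by Lemma \ref{lem:FlatCon}(a), unique up to isomorphism. I would then verify set-theoretically that the image of $\iota_P$ is exactly $o_K^{-1}(\Lambda)$: the inclusion $o_K^{-1}(\Lambda) \subseteq \mathrm{im}(\iota_P)$ is Lemma \ref{lem:FlatCon}(b), since any $y$ with $o_K(y) = o_K(x)$ has $P_y \cong P_x = P$; conversely, any $z$ in the image of $\iota_P$ has $P_z \cong P = P_x$ by the uniqueness clause in Lemma \ref{lem:FlatCon}(a), and then Lemma \ref{lem:FlatCon}(b) forces $o_K(z) = o_K(x) = \Lambda$. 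Since $\iota_P$ is a topological embedding, restriction to its image gives the desired homeomorphism $\A_{\mathrm{flat}}(P)/\G_0(P) \cong o_K^{-1}(\Lambda)$. Uniqueness of $P$ up to isomorphism follows because any $P'$ with the same property contains some $y \in o_K^{-1}(\Lambda)$ in its image, forcing $P' \cong P_y \cong P_x = P$.

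For part (b), pick any $A \in \A_{\mathrm{flat}}(P)$, set $x \defeq \iota_P([A]) \in \R(K,G)$, and define $\Lambda \defeq o_K(x)$. Since $x$ lies in the image of both $\iota_P$ and $\iota_{P_x}$, the uniqueness in Lemma \ref{lem:FlatCon}(a) gives $P \cong P_x$; applying part (a) already proved to this $\Lambda$ then yields a homeomorphism $\A_{\mathrm{flat}}(P)/\G_0(P) \cong o_K^{-1}(\Lambda)$. Uniqueness of $\Lambda$ is automatic: any $\Lambda'$ for which $\iota_P$ has image $o_K^{-1}(\Lambda')$ must contain $x$ in that preimage, and hence $\Lambda' = o_K(x) = \Lambda$.

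The only step that requires any care is checking both set-theoretic inclusions $\mathrm{im}(\iota_P) = o_K^{-1}(\Lambda)$ in part (a); the topology is free from (\ref{eq:Connections0}) and uniqueness of bundles is free from Lemma \ref{lem:FlatCon}. I expect this to be the main (mild) obstacle, since one has to invoke both clauses (a) and (b) of Lemma \ref{lem:FlatCon} in opposite directions and keep track of the distinction between the \emph{constructed} bundle $P_x$ and an arbitrary bundle $P$ that happens to realize $x$.
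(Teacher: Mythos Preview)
Your proposal is correct and matches the paper's intended approach: the paper presents Corollary~\ref{cor:Components} as a direct consequence of Lemma~\ref{lem:FlatCon} without further details, and your argument supplies exactly those details in the expected way. The two-sided inclusion $\mathrm{im}(\iota_P) = o_K^{-1}(\Lambda)$ via clauses (a) and (b) of Lemma~\ref{lem:FlatCon}, together with the embedding property of (\ref{eq:Connections0}), is precisely what the paper has in mind.
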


\noindent In this way, the decomposition (\ref{eq:Connections}) in terms of bundle types corresponds precisely to the decomposition (\ref{eq:ODecomp}) in terms of fibers of the obstruction map.

\subsection{Digression: The second Stiefel--Whitney class}\label{sec:ASW}

 Here we give an interpretation of the obstruction map $o_K$ in terms of the second Stiefel--Whitney class. The material of this section is not used elsewhere in this paper.

 Fix a finitely-presented group $\pi$. Let $G$ be a topological group admitting a universal cover and suppose, in addition, that $\pi_1(G)$ is discrete and abelian; such is the case when $G$ is a semisimple Lie group. Let $\pi$ be a finitely-presented group and view $\pi_1(G)$ as $\pi$-module with the trivial action of $\pi$. Then we can associate to this the group cohomology $H^k(\pi, \pi_1(G))$. We will ultimately only be interested in this for $k = 2$, but we recall that the $k$-cochains are precisely the maps $\pi^k \rightarrow \pi_1(G)$, and the boundary operator on $1$-cochains $f: \pi \rightarrow \pi_1(G)$ is given by 
  $$d f(\gamma_1 , \gamma_2) = f(\gamma_2) f(\gamma_1 \gamma_2)^{-1} f(\gamma_1)$$
  while its action on $2$-cochains $w: \pi^2 \rightarrow \pi_1(G)$ is 
  $$dw (\gamma_1, \gamma_2, \gamma_3 ) = w(\gamma_2, \gamma_3) w(\gamma_1 \gamma_2, \gamma_3)^{-1} w(\gamma_1, \gamma_2 \gamma_3) w(\gamma_1, \gamma_2)^{-1}.$$
Since we are electing to write the elements of $\pi_1(G) \subseteq \widetilde{G}$ multiplicatively, we do the same for cochains. In particular, we will write $1$ for the identity of $H^2(\pi, \pi_1(G))$.

Now we will associate to each $x \in \R(\pi, G)$ an element $w_x^G \in H^2(\pi, \pi_1(G))$; our approach mimics that of \cite[pp. 8--9]{DS}. Viewing $x$ as a map $\pi \rightarrow G$, let $\widetilde{x}: \pi \rightarrow \widetilde{G}$ be any lift (this lift need not be a group homomorphism). Define $\widetilde{w}_x : \pi^2 \rightarrow \widetilde{G}$ by
$$\widetilde{w}_x(\gamma_1, \gamma_2) = \widetilde{x}(\gamma_1 \gamma_2) \widetilde{x} (\gamma_2)^{-1} \widetilde{x}(\gamma_1)^{-1}.$$
Since $x$ is a group homomorphism, this descends to the identity in $G$ and so $\widetilde{w}_x$ can be viewed as a map $\pi^2 \rightarrow \pi_1(G)$. As one can check, $\widetilde{w}_x$ is closed and so descends to a group cohomology class
$$w_x \defeq [ \widetilde{w}_x] \in H^2(\pi, \pi_1(G)).$$
This is independent of the choice of lift $\widetilde{x}$. We will call $w_x$ the \emph{algebraic second Stiefel--Whitney class of $x$}.  

\begin{proposition}\label{prop:Lift}
Let $x \in \R(\pi, \pi_1(G))$. Then $w_x  = 1 \in H^2(\pi, \pi_1(G))$ if and only if $x$ has a lift $\pi \rightarrow \widetilde{G}$ that is a group homomorphism. 
\end{proposition}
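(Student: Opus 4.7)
The plan is to prove the two directions separately. The $(\Leftarrow)$ direction is essentially immediate: if $\widetilde{x}: \pi \to \widetilde{G}$ is a group homomorphism lifting $x$, then substituting $\widetilde{x}(\gamma_1\gamma_2)=\widetilde{x}(\gamma_1)\widetilde{x}(\gamma_2)$ into the definition of $\widetilde{w}_x$ gives
$$\widetilde{w}_x(\gamma_1, \gamma_2) = \widetilde{x}(\gamma_1)\widetilde{x}(\gamma_2)\widetilde{x}(\gamma_2)^{-1}\widetilde{x}(\gamma_1)^{-1} = 1.$$
So $\widetilde{w}_x$ is the trivial cochain, and hence $w_x = 1$ in cohomology.

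For $(\Rightarrow)$, the idea is that modifying a lift $\widetilde{x}$ by a $\pi_1(G)$-valued $1$-cochain changes $\widetilde{w}_x$ by a coboundary; so if $w_x = 1$, one can cancel the cocycle outright and obtain a lift that is a bona fide homomorphism. Concretely, assuming $\widetilde{w}_x = d\phi$ for some $1$-cochain $\phi: \pi \to \pi_1(G)$, I would set $\widetilde{x}'(\gamma) \defeq \phi(\gamma)\widetilde{x}(\gamma)$. Since $\phi$ takes values in $\pi_1(G)\subseteq \widetilde{G}$, which maps to the identity in $G$, this $\widetilde{x}'$ is still a lift of $x$; only its status as a homomorphism remains to be checked.

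The bulk of the argument is this verification. It relies on two facts: first, $\pi_1(G) \subseteq Z(\widetilde{G})$ is central, so $\phi$-values commute past everything in $\widetilde{G}$; and second, the identity $\widetilde{x}(\gamma_1)\widetilde{x}(\gamma_2) = \widetilde{w}_x(\gamma_1,\gamma_2)^{-1}\widetilde{x}(\gamma_1\gamma_2)$, which is a direct rearrangement of the definition of $\widetilde{w}_x$. Combining these with the multiplicative, abelian form of the coboundary relation
$$\widetilde{w}_x(\gamma_1, \gamma_2) = \phi(\gamma_1)\phi(\gamma_2)\phi(\gamma_1\gamma_2)^{-1},$$
a short calculation starting from $\widetilde{x}'(\gamma_1)\widetilde{x}'(\gamma_2) = \phi(\gamma_1)\phi(\gamma_2)\widetilde{x}(\gamma_1)\widetilde{x}(\gamma_2)$ should collapse to $\phi(\gamma_1\gamma_2)\widetilde{x}(\gamma_1\gamma_2) = \widetilde{x}'(\gamma_1\gamma_2)$, as required.

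The main obstacle I anticipate is purely notational: keeping the orderings of factors in the cocycle and coboundary formulas straight, especially since the discussion writes everything multiplicatively even though $\pi_1(G)$ sits inside $\widetilde{G}$ as a central abelian subgroup. Because $\pi_1(G)$ is abelian and central, this bookkeeping collapses, and no substantive difficulty arises beyond the careful manipulation of the two-term cocycle/coboundary identities. Note also that the cocycle property of $\widetilde{w}_x$ and the independence of $w_x$ from the choice of lift are part of the setup preceding the proposition, so they are not re-established here.
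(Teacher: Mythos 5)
Your proof is correct and takes essentially the same route as the paper: use the triviality of $w_x$ to write $\widetilde{w}_x = d\phi$, twist the lift $\widetilde{x}$ by the central $1$-cochain $\phi$, and check directly (via centrality and commutativity of $\pi_1(G)$) that the new lift is multiplicative. In fact you get the sign right: with the paper's conventions for $\widetilde{w}_x$ and $df$, the corrected lift is $\phi\widetilde{x}$ (equivalently $\widetilde{x}f$), not $\widetilde{x}f^{-1}$ as the paper briefly writes.
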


\begin{proof}
If $w_x = 1$, then $\widetilde{w}_x = d f$ for some $f: \pi^2 \rightarrow \pi_1(G)$. It follows that $\widetilde{x} f^{-1}: \pi \rightarrow \pi_1(G)$ is another lift of $x$ and the condition $\widetilde{w}_x = df$ is exactly that $\widetilde{x} f^{-1}$ is a group homomorphism. The converse is clear. 
\end{proof}

Now suppose $\pi$ has a presentation $\langle S \vert R \rangle$ as in Remark \ref{rem:ObExtension} so that the obstruction map $o_\pi: \R(\pi, G) \rightarrow {\pi_1(G)}^R$ is defined. Then we see from Proposition \ref{prop:HomExtension} that if $x \in \R(\pi, G)$, then
$$w_x = 1 \Longleftrightarrow o_\pi(x) = 1_{G}.$$

We can say a bit more in the case where $\pi = \Gamma_K$ is a RAAG (so $S = \V$ and $R = \E$). Let $e \in \E$, and set $v = v(e)$ and $w = w(e)$. Thus, $vw = wv$ in $\Gamma_K$. Then we have
$$\begin{array}{rcl}
o_K(x)_e & = & \widetilde{x}(v) \widetilde{x}(w) \big( \widetilde{x}(w) \widetilde{x}(v) \big)^{-1}
\end{array}$$
Using $\widetilde{x}(v) \widetilde{x}(w) = \widetilde{w}_x(v, w)^{-1} \widetilde{x}(vw) $ and the analogous identity for $\widetilde{x}(w) \widetilde{x}(v)$, we get
\begin{equation}\label{eq:OSW}
o_K(x)_e =  \widetilde{w}_x(v, w)^{-1} \widetilde{w}_x(w, v)^{-1}.
\end{equation}

To tie this in with the Stiefel--Whitney class for bundles, consider the case $G = \SO(n)$ for $n \geq 3$. Then $\pi_1(G) \cong \left\{ \pm 1 \right\}$ consists of two elements, and we infer from (\ref{eq:OSW}) that 
\begin{equation}\label{eq:OKSW}
o_K(x)_e = 1\; \; \; \Longleftrightarrow \; \; \; \widetilde{w}_x(v, w) = \widetilde{w}_x(w, v)
\end{equation}
for all $e = \left\{v, w \right\} \in \E$. Now fix $x \in \R(\pi, \pi_1(G))$. By (\ref{eq:Connections}), there is some principal $G$-bundle $P \rightarrow X$ so that $x$ is in the image of (\ref{eq:Connections0}). As we saw in Proposition \ref{prop:Lift}, the group cohomology class $w_x^G \in H^2(\pi, \pi_1(G))$ is the obstruction to $x$ lifting to a representation $\pi \rightarrow \widetilde{G}$. When $x$ does lift, the structure group of $P$ reduces to $\widetilde{G}$. The second Stiefel--Whitney class $w_2(P) \in H^2(X, \bb{Z}/2\bb{Z})$ is also precisely this obstruction, and
$$w_x^G = w_2(P)$$
under the identification $H^2(\pi, \pi_1(G)) \cong H^2(X, \bb{Z}/ 2 \bb{Z})$; see \cite[p. 9]{DS}. Combining this with (\ref{eq:OKSW}) gives a relationship between the obstruction map $o_K$ and the second Stiefel--Whitney class $w_2$ when $\pi = \Gamma_K$ is a RAAG.

\section{Special Case: \texorpdfstring{$G = \SO(3)$}{}}\label{sec:SO(3)}

Our ultimate aim is to prove Theorems \ref{thm:2} and \ref{thm:3} as well as Corollary \ref{cor:4}; we carry this out in Section \ref{sec:Proofs} after a careful analysis of the obstruction map $o_K$ in Section \ref{sec:Analysis} and a review of quaternions.

\subsection{Quaternionic preliminaries}

Recall that if $q = x_0 + x_1 i + x_2 j + x_3 k$ is a quaternion with $x_0, \ldots, x_3 \in \bb{R}$, then the \emph{quaternionic conjugate} of $q$ is 
$$q^* \defeq x_0 - x_1 i - x_2 j - x_3 k$$
and its \emph{real} and \emph{imaginary} parts of $q$ are
$$\mathrm{Re}(q) = \frac{1}{2}(q + q^*) = x_0, \hspace{1cm} \mathrm{Im}(q) = \frac{1}{2}(q - q^*) =  x_1 i + x_2 j + x_3 k,$$
respectively. Write $\mathrm{Im}(\bb{H})$ for the subspace of purely imaginary quaternions (i.e., those $q$ with $q = -q^*$). This is a 3-dimensional vector space that we identify with $\bb{R}^3$. As such, we will view $S^2\subset S^3$ as the purely imaginary unit length quaternions
$$S^2= S^3 \cap \mathrm{Im}(\bb{H}).$$

Note the formula
$$\langle a, b \rangle = \mathrm{Re}(a b^*)$$
recovers the standard inner product under the obvious identification $\bb{H} \cong \bb{R}^4$. Moreover, if $q, p \in S^3$, then
$$\langle q a p, q b p \rangle = \langle a, b \rangle$$
since $q q^* = p p^* =  1$. Restricting to the case where $p = q^{-1}$, we see that $S^3$ acts on $\mathrm{Im}(\bb{H}) \cong \bb{R}^3$ by conjugation, and this action preserves the inner product. It is in this way that we get a map
$$S^3 \longrightarrow \SO(3), \hspace{1cm} q \longmapsto (a \mapsto q a q^{-1}).$$
This is a covering map and it is through this that we view $S^3$ as the universal cover of $\SO(3)$. 

Throughout this section, for $x\in \SO(3)$, the symbol $\tilde{x}\in S^3$ will denote a lift of $x$. We identify $\pi_1(G)\cong \{\pm 1\}\subseteq S^3$. As we have seen in our discussion of the obstruction map, if $x,y \in \SO(3)$ commute, then these have lifts $\widetilde{x}$ and $\widetilde{y}$ with $[\widetilde{x}, \widetilde{y}] \in \left\{ \pm 1 \right\}$. The next lemmas give geometric interpretations for when the commutator is $+1$ and $-1$, respectively.

\begin{lemma}\label{lem:Com}
    Let $a, b \in S^2 \subseteq S^3$. Then $[a, b]  = 1$ $\iff$ $a = \pm b$.
\end{lemma}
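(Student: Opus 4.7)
The plan is to reduce the commutator $[a,b]$ to a simple polynomial equation in $ab$ and then exploit the fact that $\bb{H}$ is a division ring.

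First I would observe that any $a \in S^2$ is a pure imaginary unit quaternion, so $a^* = -a$ and $a a^* = |a|^2 = 1$; hence $a^{-1} = -a$, or equivalently $a^2 = -1$. Applying this to both $a$ and $b$, the commutator collapses to
$$[a,b] \;=\; a b a^{-1} b^{-1} \;=\; a b (-a)(-b) \;=\; (ab)^2.$$
Thus $[a,b] = 1$ if and only if $(ab)^2 = 1$.

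Next, since $\bb{H}$ has no zero divisors, the factorization $(ab)^2 - 1 = (ab-1)(ab+1)$ forces $ab = \pm 1$. In particular $ab$ is real. On the other hand, for $a, b \in \mathrm{Im}(\bb{H})$ identified with $\bb{R}^3$, the quaternion product decomposes as $ab = -\langle a, b \rangle + a \times b$, so $ab \in \bb{R}$ forces $a \times b = 0$. Since $a$ and $b$ are unit vectors, $a \times b = 0$ is equivalent to $a = \pm b$. The converse implication (if $a = \pm b$ then $a$ and $b$ commute trivially) is immediate.

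There is no real obstacle here; the only thing to be careful about is the sign bookkeeping in the identity $a^{-1} = -a$ for $a \in S^2$, which is what turns the commutator $[a,b]$ into the clean square $(ab)^2$ and makes the rest of the argument follow from elementary properties of quaternion multiplication.
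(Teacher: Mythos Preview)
Your proof is correct and takes a genuinely different route from the paper. The paper argues by first reducing to the special case $a = i$ using the transitive action of $\SO(3)$ on $S^2$, and then does a direct coordinate computation to force $b_2 = b_3 = 0$. Your approach is coordinate-free and purely algebraic: the identity $a^{-1} = -a$ for $a \in S^2$ collapses the commutator to $(ab)^2$, and the division-ring structure of $\bb{H}$ finishes the job. This is cleaner and avoids both the transitivity argument and any explicit computation in the basis $i, j, k$. One minor streamlining: once you have $ab = \pm 1$, you can skip the cross-product identity entirely and conclude directly that $b = \pm a^{-1} = \mp a$; but the detour through $ab = -\langle a, b \rangle + a \times b$ is of course also valid.
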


\begin{proof}
One direction is trivial. For the converse, assume $[a, b] = 1$ for $a, b \in S^2$. We will first prove that $a  =\pm b$ under the assumption that $a = i$. Write $b = b_1 i + b_2 j + b_3 k$. A direct computation using $[i, j] = [i, k] =-1$ shows that when $a = i$ we must have $b_2 = b_3 = 0$. It follows that $b = b_1 i = \pm i$. 

For general $a \in S^2$, use the fact that $\SO(3)$ acts transitively on $S^2$: There is some $q \in S^3$ so that $q a q^{-1} = i$. Then $q a q^{-1}$ and $q b q^{-1}$ commute, so the argument of the previous paragraph shows that $q b q^{-1} = \pm i$; thus $b = \pm a$. 
\end{proof}

\begin{lemma}\label{lem:AntiCom}
    Let $a, b \in S^3$. Then $[a, b] = -1$ $\iff$
    $a, b \in S^2$ and $a \perp b$.
\end{lemma}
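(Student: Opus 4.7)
The plan is to prove both directions by a direct quaternionic calculation, resting on the basic identity
$$ab + ba = -2\langle a, b\rangle \qquad \text{for all } a, b \in \mathrm{Im}(\bb{H}),$$
where we identify $\mathrm{Im}(\bb{H}) \cong \bb{R}^3$ with its standard inner product. This is just the quaternionic encoding of $ab = -\langle a,b\rangle + a \times b$ for purely imaginary quaternions.

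For the ``if'' direction, I would first record that an element $a \in S^2$ satisfies $a^* = -a$ and $|a| = 1$, so $a^{-1} = -a$ and $a^2 = -1$, with the same relations for $b$. The orthogonality hypothesis together with the identity above gives $ab = -ba$. Plugging into the commutator and applying this anticommutation once yields
$$[a, b] = ab\,a^{-1}b^{-1} = (ab)(-a)(-b) = abab = (-ba)(ab) = -b\,a^2\,b = b^2 = -1,$$
which is the desired conclusion.

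For the converse, suppose $[a,b] = -1$; since $-1$ is central in $S^3$, this is equivalent to $ab + ba = 0$. I would decompose $a = a_0 + a'$ and $b = b_0 + b'$ with $a_0, b_0 \in \bb{R}$ and $a', b' \in \mathrm{Im}(\bb{H})$, and expand using the identity above to obtain
$$ab + ba = 2\bigl(a_0 b_0 - \langle a', b'\rangle\bigr) + 2\bigl(a_0 b' + b_0 a'\bigr).$$
Setting this equal to zero yields two conditions: a scalar equation $a_0 b_0 = \langle a',b'\rangle$, and an imaginary equation $a_0 b' + b_0 a' = 0$. The main obstacle — really the only nontrivial step — is excluding the case $a_0 \neq 0$. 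If $a_0 \neq 0$, the imaginary equation forces $b' = -(b_0/a_0)\,a'$, hence $b = (b_0/a_0)\,a^*$ is a real scalar multiple of $a^*$; but $a$ commutes with $a^*$ (since $aa^* = a^*a = |a|^2$), so $a$ would commute with $b$, contradicting $ab = -ba \neq 0$. Thus $a_0 = 0$ and, by symmetry, $b_0 = 0$, so $a, b \in S^2$; the scalar equation then collapses to $\langle a,b\rangle = 0$.
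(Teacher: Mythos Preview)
Your argument is correct in both directions. The reduction of $[a,b]=-1$ to the anticommutation relation $ab+ba=0$ is valid (multiply $aba^{-1}b^{-1}=-1$ on the right by $ba$), and from there your real/imaginary decomposition together with the identity $a'b'+b'a'=-2\langle a',b'\rangle$ cleanly forces $a_0=b_0=0$ and $\langle a,b\rangle=0$. The ``if'' direction is a straightforward computation once $ab=-ba$ and $a^2=b^2=-1$ are in hand.

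The paper's proof takes a somewhat different route. For the forward implication it observes that $[a,b]=-1$ gives $aba^{-1}=-b$, and then applies the conjugation-invariance of the real part, $\mathrm{Re}(qxq^{-1})=\mathrm{Re}(x)$, to conclude $\mathrm{Re}(b)=0$; the same trick with $bab^{-1}=-a$ and with $ab$ gives $\mathrm{Re}(a)=0$ and $\mathrm{Re}(ab)=0$, whence $\langle a,b\rangle=\mathrm{Re}(ab^*)=-\mathrm{Re}(ab)=0$. For the converse the paper invokes the transitivity of the $\SO(3)$-action on orthonormal pairs in $S^2$ to reduce to the single check $[i,j]=-1$. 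Your approach is more purely algebraic and self-contained---you never appeal to the $\SO(3)$-action or to conjugation-invariance of traces---whereas the paper's argument is a bit more geometric and perhaps generalizes more readily to settings where one has a transitive symmetry group but no convenient coordinate expansion. Either way, the content is the same and the lengths are comparable.
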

\begin{proof}
If $[a, b] = -1$, then $- b = ab a^{-1}$. Taking the real part we find
$$-\mathrm{Re}(b) = \mathrm{Re}(aba^{-1}) = \mathrm{Re}(b) .$$
It follows that $b \in \mathrm{Im}(\bb{H})$ is purely imaginary. A similar argument shows that $\mathrm{Re}(a) = \mathrm{Re}(ab) = 0$. The first tells us that $a \in \mathrm{Im}(\bb{H})$, while the second tells us that $a$ and $b$ are othogonal: We have $b^* = -b$ and so 
$$\langle a, b \rangle = \mathrm{Re}(ab^*) =  - \mathrm{Re}(ab)  = 0.$$

Conversely, suppose $a, b \in S^2$ with $a, b$ perpendicular. The group $\SO(3)$ acts transitively on the orthonormal pairs in $S^2$, so there is some $q \in S^3$ with $a = qi q^{-1}$ and $b = q j q^{-1}$. This gives
$$[a, b] = [q i q^{-1}, qj q^{-1}] = q[ i, j] q^{-1}= q (-1) q^{-1} =  -1.$$
\end{proof}

\subsection{Analysis of the obstruction map}\label{sec:Analysis}

Fix a graph $K = (\V, \E)$. The obstruction map $o_K$ is a map of the form $\R(K, \SO(3)) \rightarrow \left\{ \pm 1 \right\}^\E$. As such, relative to any edge marking, each edge will be labeled with either $+1$ or $-1$. The main results of this section are as follows.

\begin{theorem}\label{thm:Inj}
    Let $K$ be a disjoint union of cycles, trees, and complete graphs. Then the map 
    $$\pi_0(\R(K, \SO(3))) \longrightarrow \left\{ \pm 1 \right\}^{\E}$$
    induced by the obstruction map $o_K$ is injective. In particular, 
    $$\vert \pi_0(\R(K, \SO(3))) \vert \leq 2^{\vert \E \vert}.$$
\end{theorem}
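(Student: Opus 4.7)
The plan is to prove that for every $\Lambda \in \{\pm 1\}^{\E}$, the preimage $o_K^{-1}(\Lambda)$ is either empty or path-connected; since $o_K$ is locally constant, this is equivalent to the stated injectivity on $\pi_0$. By Corollary \ref{cor:Funct} and the factorization $\R(K_1 \sqcup K_2, \SO(3)) \cong \R(K_1, \SO(3)) \times \R(K_2, \SO(3))$, we reduce to the case where $K$ is connected---a single tree, cycle, or complete graph.

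The core structural input comes from Lemmas \ref{lem:Com} and \ref{lem:AntiCom}. Let $V^- \subseteq \V$ denote the set of vertices incident to at least one $-1$-marked edge; any $x \in o_K^{-1}(\Lambda)$ must send every $v \in V^-$ to a nontrivial involution in $\SO(3)$. A $+1$-edge joining two $V^-$ vertices then forces their images to be the \emph{same} involution (by Lemma \ref{lem:Com}), while a $+1$-edge at $v \in V^+ := \V \setminus V^-$ only requires $\widetilde{x}_v$ to commute in $S^3$ with the neighbor's lift---i.e.\ that $x_v$ rotate about the neighbor's axis, or be the identity.

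For trees I would argue in two stages. First, contract each $V^+$ component to the identity, one vertex at a time: with the other entries of $x$ held fixed, the allowed set for $x_v \in \SO(3)$ is the intersection over neighbors $w$ of $v$ of the sets $\{g \in \SO(3) : [\widetilde{g}, \widetilde{x}_w] = 1 \text{ in } S^3\}$, each of which is either $\SO(3)$ (when $x_w = 1$) or the $\SO(2)$ of rotations about $x_w$'s axis. This intersection is a connected subgroup of $\SO(3)$ containing $1$, so we can continuously contract $x_v$ to the identity while remaining in $o_K^{-1}(\Lambda)$. Second, once $V^+$ sits at the identity, the constraints involving $V^+$ become vacuous and the problem reduces to $V^-$: collapsing each maximal $+1$-connected subtree inside $K[V^-]$ to a supernode forced to carry a single common axis yields a forest $K''$ of pure perpendicularity constraints in $\mathbb{RP}^2$. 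Parameterizing each tree of $K''$ root-to-leaves---root's axis in $\mathbb{RP}^2$, each child's axis in the $S^1$ of lines perpendicular to its parent's---exhibits the axis-assignment space as a tower of connected fibrations.

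For complete graphs $K_n$, any pairwise commuting $n$-tuple in $\SO(3)$ is either toroidal---lying in a common maximal torus (so $\Lambda$ is all $+1$) and contractible onto the trivial representation---or it lies in a Klein-four-type subgroup, in which case $\Lambda$ determines a partition of $\V$ into at most three axis-classes and the fiber is a bundle over the connected orthonormal-frame space modulo axis-sign involutions. For cycles, I would remove a single edge, invoke the tree case on the resulting path, and then impose the extra loop-closure commutator condition. This last step is the main obstacle: Stage~2 may now leave a contracted graph $K''$ that is itself a cycle of perpendicularity edges, and connectedness of the resulting cyclic axis-assignment space requires a separate fibration-over-$\mathbb{RP}^2$ argument that carefully tracks the monodromy around the loop.
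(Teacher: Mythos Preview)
Your outline is sound for trees and (modulo minor imprecision) for complete graphs, but it is organized quite differently from the paper and leaves the cycle case genuinely incomplete --- exactly where you flag it.

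The paper does not split into $V^+/V^-$ or treat the three graph types by separate mechanisms. Instead it proves two reusable reduction lemmas. The Vertex-Deletion Lemma says that deleting a vertex all of whose incident edges are $+1$-marked induces a $\pi_0$-bijection on fibers of $o_K$; its proof is your Stage~1 move (path the vertex to $1$ inside $\bigcap_w C_{S^3}(\widetilde{x}_w)$, using that $S^3$ has Property~\ref{A}). The Edge-Contraction Lemma says that contracting a $+1$-edge whose endpoints are each leaves or each touch a $-1$-edge induces a homeomorphism of fibers; this is your Stage~2 collapse. Since the class ``disjoint union of trees, cycles, complete graphs'' is closed under both operations, every marking reduces to the all-$(-1)$ marking on a graph of the same type. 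Complete $K_n$ with $n\ge 4$ is then immediate: the all-$(-1)$ fiber is empty, since Lemma~\ref{lem:AntiCom} would force four pairwise orthogonal unit vectors in $\mathrm{Im}(\mathbb{H})\cong\mathbb{R}^3$. This replaces your Klein-four classification, which is correct but more work.

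The substantive point you are missing is that the all-$(-1)$ case for cycles needs no monodromy tracking at all. The paper's Proposition~\ref{prop:ConnectedFiber} handles trees and cycles \emph{simultaneously} via the ordering hypothesis $|N_{v_i}\cap\{v_1,\dots,v_{i-1}\}|\le 2$, which a cyclic enumeration of $C_m$ obviously satisfies. One then deletes $v_m$ and inducts: the fiber of the deletion map over a fixed $(x_{v_1},\dots,x_{v_{m-1}})$ is the set of $g\in\SO(3)$ with $[\widetilde g,\widetilde x_{v_1}]=[\widetilde g,\widetilde x_{v_{m-1}}]=-1$, which by Lemma~\ref{lem:AntiCom} is the projection to $\SO(3)$ of the intersection of two great circles in $S^2$ --- either a full circle or two antipodal points, hence in either case a single connected (and nonempty) set after projection. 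So your ``separate fibration argument that carefully tracks the monodromy around the loop'' collapses to a one-line fiber computation once you delete the last vertex rather than project to the first.
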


\begin{theorem}\label{thm:Surj}
    Let $K$ be a disjoint union of cycles and trees. Then the map 
    $$ \pi_0(\R(K, \SO(3))) \longrightarrow \left\{ \pm 1 \right\}^{\E}$$
    induced by the obstruction map $o_K$ is surjective. In particular,
    $$\vert \pi_0(\R(K, \SO(3))) \vert \geq 2^{\vert \E \vert}.$$
    
    Conversely, if $K$ is a graph with the property that $o_K$ is surjective, and if $K_0 \subseteq K$ is a component that is not a cyclic graph, then $K_0$ contains no 3-cycles.
\end{theorem}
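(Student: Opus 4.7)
The plan is to prove the two assertions separately. Since $\R(K_0 \sqcup K_1, \SO(3)) = \R(K_0, \SO(3)) \times \R(K_1, \SO(3))$ and the obstruction map factors coordinate-wise, surjectivity reduces to the connected case; it therefore suffices to prove $o_K$ is surjective when $K$ is a tree or a single cycle.

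For a tree, I would fix a root $r \in \V$, set $\tilde{x}_r = i \in S^2$, and extend inductively along the tree: for each non-root vertex $v$ with parent $p$, set $\tilde{x}_v = \tilde{x}_p$ if $\Lambda_{\{v, p\}} = +1$, and otherwise choose any unit vector in $S^2$ orthogonal to $\tilde{x}_p$ (such a vector exists since $S^2$ is two-dimensional). Lemmas \ref{lem:Com} and \ref{lem:AntiCom} then yield $o_K(x) = \Lambda$.

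For a cycle $C_n$ with $n \geq 3$, I would split into two sub-cases. If some vertex $v$ is incident to two $+1$ edges, setting $\tilde{x}_v = 1$ realizes both incident edges as $+1$ (since $[1, \cdot] = 1$), after which applying the tree construction to the remaining path $C_n \setminus \{v\}$ completes the representation. Otherwise, every vertex of $C_n$ is incident to a $-1$ edge, so no two $+1$ edges are consecutive and the number $m$ of $-1$ edges satisfies $m \geq 2$ (since $m \leq 1$ would leave some vertex with two incident $+1$ edges). Grouping the vertices into the $m$ arcs bounded by the $-1$ edges and assigning each arc a single direction in $S^2$ reduces the problem to exhibiting a cyclic sequence $d_1, \ldots, d_m \in S^2$ with $d_i \perp d_{i+1}$ (indices mod $m$). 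Such sequences are easy to construct: use $(i, j)$ for $m = 2$, $(i, j, k)$ for $m = 3$, and for larger $m$ extend by appending pairs $(i, j)$.

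For the converse, suppose $K_0$ is a connected component of $K$ containing a 3-cycle on vertices $\{a, b, c\}$ with $K_0 \neq C_3$. Then $K_0$ has at least four vertices and, by connectedness, contains an edge $\{a, d\}$ (after relabeling) with $d \notin \{a, b, c\}$. The marking $\Lambda$ defined by $\Lambda_{\{a,b\}} = \Lambda_{\{a,c\}} = +1$, $\Lambda_{\{b,c\}} = \Lambda_{\{a,d\}} = -1$, and every other edge of $K$ marked $+1$, is not realizable: Lemma \ref{lem:AntiCom} applied to $\Lambda_{\{b,c\}} = -1$ forces $\tilde{x}_b$ and $\tilde{x}_c$ to be orthogonal elements of $S^2$, so the centralizers $C_{S^3}(\tilde{x}_b) = \mathrm{span}(1, \tilde{x}_b) \cap S^3$ and $C_{S^3}(\tilde{x}_c) = \mathrm{span}(1, \tilde{x}_c) \cap S^3$ are distinct maximal tori intersecting only in $\{\pm 1\}$. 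The conditions $\Lambda_{\{a,b\}} = \Lambda_{\{a,c\}} = +1$ force $\tilde{x}_a$ into this intersection, so $\tilde{x}_a \in \{\pm 1\}$, which contradicts the requirement $\tilde{x}_a \in S^2$ imposed by $\Lambda_{\{a,d\}} = -1$.

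The main obstacle is the second cycle sub-case, where we cannot short-circuit the argument by setting any vertex to the identity and the realization must be assembled entirely from noncentral involutions. The essential insight is that the sub-case hypothesis forces $m \geq 2$, which provides just enough two-dimensional flexibility in $S^2$ to close up a cyclic sequence of pairwise-orthogonal directions.
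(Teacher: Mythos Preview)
Your argument is correct. For the converse you reproduce exactly the configuration of Example~\ref{ex:1} (with the roles of the vertices relabeled), and your deduction that $\tilde{x}_a \in \{\pm 1\}$ from the two maximal tori intersecting only in the center is the same mechanism the paper uses.

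For surjectivity, however, your route is genuinely different from the paper's. The paper does not construct representatives directly: instead it invokes the Vertex-Deletion and Edge-Contraction Lemmas (Lemmas~\ref{lem:VertexDeletion} and~\ref{lem:EdgeContraction}) to reduce an arbitrary marking on a tree or cycle to the all-$-1$ marking on a (possibly smaller) tree or cycle, and then appeals to Proposition~\ref{prop:ConnectedFiber}, whose inductive proof establishes nonemptiness as a byproduct of connectedness. Your approach bypasses this machinery entirely and writes down an explicit element of each fiber, handling the cycle case by a clean reduction to cyclic sequences of pairwise-orthogonal directions in $S^2$. The paper's method has the advantage of reusing the same reduction lemmas that drive the injectivity proof, so surjectivity comes almost for free once that infrastructure is in place; your method is more self-contained and makes the realizability completely concrete, at the cost of the small case analysis in the cycle sub-case. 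Both arguments ultimately rest on the same quaternionic facts (Lemmas~\ref{lem:Com} and~\ref{lem:AntiCom}).
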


We prove these below, after we develop a better understanding of the fibers of the obstruction map. Before getting to that, we highlight the following two examples showing that injectivity and surjectivity can each fail.

\begin{example}
    Here we show the map $\pi_0(\R(K, \SO(3))) \rightarrow \left\{ \pm 1 \right\}^{\E}$ need not be injective. Consider the marked graph $(K, \Lambda)$ illustrated here:
    \begin{center}
        \begin{tikzpicture}
   [every node/.style={fill=white,circle,minimum width=0pt,inner sep=2pt, outer sep =0pt}, dot/.style={circle,draw=blue,fill=blue!20!}]
            \node[dot] (A1) at (0, 0) {$a_1$};
            \node[dot] (A2) at (2, -2) {$a_2$};
            \node[dot] (A3) at (-2, -2) {$a_3$};

            \node[dot] (B1) at (0, 2) {$b_1$};
            \node[dot] (B2) at (4, -3) {$b_2$};
            \node[dot] (B3) at (-4, -3) {$b_3$};

            \draw (A1) edge node {-1} (A2);
            \draw (A2) edge node {-1} (A3);
            \draw (A3) edge node {-1} (A1);
            \draw (B1) edge node {-1} (B2);
            \draw (B2) edge node {-1} (B3);
            \draw (B3) edge node {-1} (B1);
            \draw (B1) edge node {-1} (A1);
            \draw (B2) edge node {-1} (A2);
            \draw (B3) edge node {-1} (A3);
            \end{tikzpicture}
    \end{center}
    Suppose $(a_1, a_2, a_3, b_1, b_2,b_3) \in o^{-1}_K(\Lambda)$.
    \begin{enumerate}
        \item If $a_2 = b_1$, then $a_3 \neq a_2 = b_1$. Since
        $\widetilde{a}_1$ and $\widetilde{b}_3$ are both perpendicular to both $\widetilde{a}_3$ and $\widetilde{b}_1$, it follows that $a_1 = b_3$. Likewise, $a_3 = b_2$.
        \item  If $a_2 \neq  b_1$, then $a_1 = b_2$, $a_2 = b_3$, and $a_3 = b_1$.
    \end{enumerate}
    So the fiber $o^{-1}(\Lambda)$ has two components: one with $(a_1, a_2, a_3) = (b_3, b_1, b_2)$ and one with $(a_1, a_2, a_3) = (b_2, b_3, b_1)$.

    A similar construction yields marked graphs with $\pi_0(\R(K, \SO(3))) \rightarrow \left\{ \pm1 \right\}^{\E}$ having $2^n$-component fibers.
\end{example}

\begin{example}\label{ex:1}
Here we show the map $\pi_0(\R(K, \SO(3))) \rightarrow \left\{ \pm 1 \right\}^{\E}$ need not be surjective. Consider the following marked graph $(L,\Lambda)$ with vertices labeled $a, b, c, d$:
\begin{center}
    \vspace{4pt}
  $(L, \Lambda):$ \hspace{1cm}  \begin{tikzcd}
        \node  (A) at (3, 0)[circle,draw=blue,fill=blue!20!,minimum size=.8cm]{a};
        \node  (B) at (0, 0)[circle,draw=blue,fill=blue!20!,minimum size=.8cm]{b};
        \node  (C) at (-2, 1.5)[circle,draw=blue,fill=blue!20!,minimum size=.8cm]{c};
        \node  (D) at (-2, -1.5)[circle,draw=blue,fill=blue!20!,minimum size=.8cm]{d};
\begin{scope}[every node/.style={fill=white,circle,minimum width=0pt,inner sep=2pt, outer sep =0pt}]
        \draw (B) edge node {1} (D);
        \draw (D) edge node {-1} (C);
        \draw (C) edge node {1} (B);
        \draw (B) edge node {-1} (A);
        \end{scope}
    \end{tikzcd} 
    \end{center}
    This edge marking $\Lambda$ is not in the image of $o_L$: If $\Lambda = o_L(x)$ for some $x$, then $x(b)$ and $x(c)$ would admit lifts $\widetilde{x}(b), \widetilde{x}(c) \in S^3$ that commute; likewise, $\widetilde{x}(b), \widetilde{x}(d)$ commute. Since $b$ is incident to an edge labeled $-1$, it follows that $\widetilde{x}(b)$ is not in the center of $S^3$. Centralizers of non-central elements in $S^3$ are abelian. Since $\widetilde{x}(c), \widetilde{x}(d) \in C_{\SO(3)}(\widetilde{x}(b))$, it follows that $\widetilde{x}(c)$ and $\widetilde{x}(d)$ commute, contrary to the label of $-1$ on the edge connecting $c$ and $d$.  
\end{example}

Our strategy for detecting the components of $\R(K, \SO(3))$ is to consider its behavior under vertex-deletion and edge-contraction in $K$. We thus need an understanding of how the fibers of the obstruction map behave under these operations. In general this is complicated, but the situation simplifies when the edge markings are sufficiently controlled.

We begin with vertex-deletion. Given a vertex $v \in \V$, write $K - \left\{v\right\}$ for the graph obtained by deleting $v$ and all edges incident to $v$. Write $N_v \subseteq \V$ for those vertices adjacent in $K$ to $v$.

\begin{lemma}[Vertex-Deletion]\label{lem:VertexDeletion}
    Let $\Lambda$ be an edge-marking for $K$. Assume that $v \in \V$ is a vertex with the property that $\Lambda_{\left\{v, w \right\}} = 1$ for all $w \in N_v$. Set $K' = K - \left\{v\right\}$ and let $i : K' \to K$ denote the inclusion. Then pullback $i^*: \R(K, \SO(3)) \rightarrow \R(K', \SO(3))$ restricts to a well-defined surjection
    \[ i^* : o_K^{-1}(\Lambda) \longrightarrow  o^{-1}_{K'}(i^*\Lambda)\]
    that induces a bijection on $\pi_0$.
\end{lemma}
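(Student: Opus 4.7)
The plan is to verify three claims in sequence: well-definedness of $i^* : o_K^{-1}(\Lambda) \to o_{K'}^{-1}(i^*\Lambda)$, its surjectivity as a set map, and the fact that it induces a bijection on $\pi_0$.

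\emph{Well-definedness} is a direct consequence of the naturality of $o$ (Proposition \ref{prop:natty}): for $x \in o_K^{-1}(\Lambda)$, one has $o_{K'}(i^*x) = i^* o_K(x) = i^*\Lambda$. \emph{Fiber-surjectivity} is proved by the natural extension. Given $y \in o_{K'}^{-1}(i^*\Lambda)$, define $x \in \SO(3)^\V$ by $x_v = 1$ and $x_w = y_w$ for $w \neq v$. Then $x \in \R(K, \SO(3))$ since $1$ commutes with every element of $\SO(3)$, and $o_K(x) = \Lambda$: on edges $\{v, w\}$ incident to $v$, any lift of $1$ to $S^3$ is central, so $o_K(x)_{\{v, w\}} = 1 = \Lambda_{\{v, w\}}$ by the hypothesis on $\Lambda$, while on edges of $K'$, $o_K(x)_e = o_{K'}(y)_e = \Lambda_e$.

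The \emph{key step}, from which $\pi_0$-bijectivity follows, is: any $x \in o_K^{-1}(\Lambda)$ is path-connected inside $o_K^{-1}(\Lambda)$ to the element $\hat x$ obtained from $x$ by replacing $x_v$ with $1$. To prove this, fix any lift $\widetilde{x}_v \in S^3$. The condition $o_K(x)_{\{v, w\}} = 1$ says exactly (by Lemma \ref{lem:WellDefined} together with the definition of $o_K$) that $[\widetilde{x}_v, \widetilde{x}_w] = 1$ in $S^3$ for every $w \in N_v$. Hence $\widetilde{x}_v \in C_{S^3}(S)$, where $S = \{\widetilde{x}_w : w \in N_v\}$. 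Because $S^3 \cong \mathrm{Sp}(1)$ satisfies Property \ref{A} (Proposition \ref{prop:SU}), there is a path $\gamma : [0,1] \to C_{S^3}(S)$ from $\widetilde{x}_v$ to some element of $Z(S^3) = \{\pm 1\}$. Since the covering map $S^3 \to \SO(3)$ sends both $\pm 1$ to the identity, projecting $\gamma$ produces a path in $\SO(3)$ from $x_v$ to $1$ lying inside $\bigcap_{w \in N_v} C_{\SO(3)}(x_w)$. Extending by the constant values $x_w$ for $w \neq v$ gives a path $x(t) \in \R(K, \SO(3))$ from $x$ to $\hat x$; continuity of $o_K$ into the discrete set $\{\pm 1\}^\E$ then guarantees the path stays within $o_K^{-1}(\Lambda)$.

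With the key step in hand, \emph{$\pi_0$-bijectivity} is formal. Surjectivity on $\pi_0$ follows from fiber-surjectivity. For injectivity, suppose $x, x' \in o_K^{-1}(\Lambda)$ with $i^*x$ and $i^*x'$ in the same component of $o_{K'}^{-1}(i^*\Lambda)$, and let $\eta$ be a joining path. Use the key step to connect $x$ to $\hat x$ and $x'$ to $\hat x'$ (each having $v$-coordinate $1$). Now lift $\eta$ to a path $\tilde\eta$ by declaring the $v$-coordinate to be constantly $1$; the same commutator calculation as in fiber-surjectivity shows $\tilde\eta$ lies in $o_K^{-1}(\Lambda)$ and interpolates between $\hat x$ and $\hat x'$. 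Concatenating yields the desired path from $x$ to $x'$. The \emph{main obstacle} is the key step: it crucially exploits the fact that the hypothesis $o_K(x)_{\{v, w\}} = 1$ records vanishing of commutators in $S^3$, not merely in $\SO(3)$. This is precisely what allows us to invoke Property \ref{A} for $\mathrm{Sp}(1) = S^3$; the analogous claim using only $\SO(3)$-commutation would fail, since $\SO(3)$ itself lacks Property \ref{A}.
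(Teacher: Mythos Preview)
Your proof is correct and follows essentially the same approach as the paper: both use naturality of $o$ for well-definedness, the ``set $x_v=1$'' extension for surjectivity, and Property~\ref{A} for $S^3$ (the paper writes $\SU(2)$, you write $\mathrm{Sp}(1)$) to deform $\widetilde{x}_v$ to the center within $\bigcap_{w\in N_v}C_{S^3}(\widetilde{x}_w)$. The only cosmetic difference is in the last step: the paper concludes $\pi_0$-bijectivity by noting $i^*$ is a proper surjection with connected fibers, whereas you carry out the path-lifting explicitly; both arguments are equivalent here.
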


\begin{proof}
    That the restriction of $i^*$ is well-defined is just Corollary \ref{cor:Funct}.
    To see it is surjective, let $x' \in o^{-1}_{K'}(i^* \Lambda)$. This can be viewed as prescribing labels for the vertices of $K'$ by elements of $\SO(3)$, in such a way that adjacent vertices have $S^3$ lifts that either commute or anticommute, according to the markings given by $i^* \Lambda$. Passing now to $K$, since all edges incident to $v$ are 1-labeled, there is a canonical element $x_0$ of the fiber $(i^*)^{-1}(x')$ given by 
    $$x_0(v) = 1, \hspace{1cm} x_0(w) = x'(w)\; \textrm{for $w \neq v$}.$$

    Fix $x \in (i^*)^{-1}(x')$; we will connect $x$ to $x_0$ by a path in $(i^*)^{-1}(x')$. Since $S^3 \cong \SU(2)$, it follows from Proposition \ref{prop:SU} that $S^3$ has Property \ref{A}. In particular, there is a path $\gamma : \widetilde{x}_v \rightsquigarrow 1$ within the image of $\bigcap_{w \in N_v} C_{S^3}(\widetilde{x}_w) \subseteq S^3$. Let $\eta$ be the path in $\SO(3)^\V$ that equals $\gamma$ on $v$ and fixes all other components. Then $\eta$ is the path we are after.

    We have just seen that $i^*$ is surjective with connected fibers. It is also proper since $K$ is finite and $\SO(3)$ is compact, so it follows that $i^*$ induces a bijection $\pi_0(o_K^{-1}(\Lambda)) \cong \pi_0(o_{K'}^{-1}(i^* \Lambda))$. 
\end{proof}

 Given an edge $e$, let $K''  = K / e$ be the graph obtained from $K$ by shrinking $e$ to a point and identifying to one edge any multi-edges created in this process. Let $q : K \rightarrow K''$ denote the quotient map.

\begin{lemma}[Edge-Contraction]\label{lem:EdgeContraction}

Let $\Lambda$ be an edge marking for $K$. Assume $e \in \E$ is a $1$-marked edge, and either (i) at least one of $v(e)$ or $w(e)$ is a leaf, or (ii) $v(e)$ and $v(w)$ are each incident to some $-1$-marked edge. Set $K''  = K / e$.

Assume there is an edge marking $\Lambda''$ for $K''$ so that $\Lambda = q^* \Lambda''$. Then $\Lambda''$ is unique and the pullback map $q^*: \R(K'', \SO(3)) \rightarrow \R(K, \SO(3))$ restricts to a homeomorphism
            \[ q^* : o^{-1}_{K''}(\Lambda'') \longrightarrow o^{-1}_K(\Lambda).\]
\end{lemma}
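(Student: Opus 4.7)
The plan is to break the proof into two parts: (A) verify that $q^*$ establishes a homeomorphism from $\R(K'', \SO(3))$ onto the closed subspace
$$Z \defeq \{\rho \in \R(K,\SO(3)) : \rho(v(e)) = \rho(w(e))\},$$
with $o_{K''}^{-1}(\Lambda'')$ mapping into $Z \cap o_K^{-1}(\Lambda)$; and (B) show that $o_K^{-1}(\Lambda) \subseteq Z$, so the restriction is actually onto the whole fiber.

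For (A), uniqueness of $\Lambda''$ is immediate: every edge $e''$ of $K''$ is the image $q(e')$ of some $e' \in \E\setminus\{e\}$, so the hypothesis $\Lambda = q^*\Lambda''$ forces $\Lambda''_{e''} = \Lambda_{e'}$, with consistency guaranteed by that same hypothesis when two edges of $K$ collapse together. The quotient map $q:K\to K''$ corresponds at the group level to the surjection $\Gamma_K \twoheadrightarrow \Gamma_{K''}$ identifying $x_{v(e)}$ with $x_{w(e)}$, so the pullback $q^*:\R(K'',\SO(3))\hookrightarrow \R(K,\SO(3))$ is a continuous injection whose image is exactly $Z$, with a manifestly continuous inverse (read off the value on the contracted vertex). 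Compatibility with the obstruction map on edges $e'\neq e$ is Proposition \ref{prop:natty}, while on $e$ itself one simply computes $o_K(q^*\rho'')_e = [\widetilde{\rho''([v])},\widetilde{\rho''([v])}] = 1 = \Lambda_e$.

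For (B), given $\rho \in o_K^{-1}(\Lambda)$, I must force $\rho(v(e)) = \rho(w(e))$. Case (ii) is the clean one: each of $v(e), w(e)$ is incident to some $-1$-marked edge, so Lemma \ref{lem:AntiCom} forces both lifts $\widetilde{\rho(v(e))}$ and $\widetilde{\rho(w(e))}$ into $S^2 \subseteq S^3$; since $e$ is $1$-marked the two lifts commute, and Lemma \ref{lem:Com} then gives $\widetilde{\rho(v(e))} = \pm\widetilde{\rho(w(e))}$, which projects to the desired equality in $\SO(3)$.

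Case (i) — the leaf case — is the main obstacle. Here one endpoint, say $w = w(e)$, has degree one, so $\rho(w)$ need not be an involution and Lemmas \ref{lem:Com}--\ref{lem:AntiCom} do not apply directly. The leaf hypothesis must be used: since $w$ appears in no relation other than $[x_{v(e)}, x_w]=1$, the component $\rho(w)$ is free to move within the centralizer of $\rho(v(e))$. The plan is to exploit that $S^3 \cong \SU(2)$ has Property \ref{A} (Proposition \ref{prop:SU}) to connect $\rho(w)$ to $\rho(v(e))$ through the identity component of $C_{S^3}(\widetilde{\rho(v(e))})$ — entirely analogous to how Lemma \ref{lem:VertexDeletion} was proved — and then to promote this deformation retraction to a genuine equality. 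Verifying the latter step carefully, rather than only the $\pi_0$-bijection that one gets immediately, is where I expect the bulk of the work in case (i) to lie, and it is the clearest point at which the specific geometry of $\SO(3)$ (and not just of a general Lie group without Property \ref{A}) must be invoked.
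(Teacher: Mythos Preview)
Your part (A) and your case (ii) argument are correct and match the paper's proof exactly: Lemma \ref{lem:AntiCom} forces both lifts into $S^2$, Lemma \ref{lem:Com} then gives $\widetilde{\rho(v(e))} = \pm\widetilde{\rho(w(e))}$, hence $\rho(v(e)) = \rho(w(e))$ and $o_K^{-1}(\Lambda)\subseteq Z$. Combined with the easy homeomorphism $q^*: o_{K''}^{-1}(\Lambda'')\to Z\cap o_K^{-1}(\Lambda)$, this finishes case (ii).

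Your instinct that case (i) is the obstacle is right, but the difficulty is fatal: the lemma as stated is \emph{false} in case (i), so no argument can close the gap. Take $K$ to be a single edge $e$ with $\Lambda_e = 1$; both endpoints are leaves, so case (i) applies. Then $K'' = K/e$ is a single vertex and $o_{K''}^{-1}(\Lambda'') \cong \SO(3)$ is $3$-dimensional, whereas $o_K^{-1}(\Lambda) = \{(g,h)\in\SO(3)^2 : [\tilde g,\tilde h]=1\}$ has generic dimension $4$. No homeomorphism exists. Your proposed deformation through $C_{S^3}(\widetilde{\rho(v(e))})$ would yield a deformation retraction onto $Z$, hence a $\pi_0$-bijection, but there is no way to ``promote'' this to a homeomorphism. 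The paper's own proof contains the same slip: it asserts ``$q^*x'' = x$'' in case (i), which is simply false whenever the leaf carries a value different from its neighbor. What actually holds in case (i) is only the $\pi_0$-bijection, and for the downstream applications (Theorems \ref{thm:Inj} and \ref{thm:Surj}) that is all that is used; indeed, when $w(e)$ is a leaf incident only to the $1$-marked edge $e$, the graph $K/e$ coincides with $K - \{w(e)\}$, and the needed $\pi_0$-bijection is already supplied by the Vertex-Deletion Lemma \ref{lem:VertexDeletion}.
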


\begin{proof}
    Any continuous bijection between compact Hausdorff spaces is a homeomorphism, so it suffices to show that $q^*$ is bijective. Injectivity of $q^*$ (even on the bigger space $\R(K'', \SO(3))$) is immediate from the surjectivity of $q$. To see that $q^*$ is surjective on the inverse images, let $x \in o_K^{-1}(\Lambda)$. Define a map $x'' : \V'' \rightarrow \SO(3)$ as follows: Let $v'' \in \V''$. If $v'' \neq q(v(e))$, then there is a unique $v \in \V$ with $v'' = q(v)$ and we set $x''(v'') \defeq x(v)$. If $v''= q(v(e))$ (and thus also equals $q(w(e))$), then set $x''(v'') \defeq x(v(e))$ in case (ii) or if case (i) holds with $w(e)$ of valance 1. In case (i) where $w(e)$ does not have valance 1, then $v(e)$ does and we set $x''(v'') \defeq x(w(e))$. 
    
    In any case, we automatically have that $q^* x'' = x$, so it suffices to check that $x'' \in \R(K'', \SO(3))$ really is a representation (i.e., takes adjacent vertices to commuting elements). In case (i) this is immediate since the valancy condition implies there are no new relations on $x''(q(v(e)))$. 
    
    We may therefore assume case (ii), so each of $v(e)$ and $w(e)$ is the endpoint of some $-1$-marked edge. See the figure in Example \ref{ex:EdgeContraction}. It follows from Lemma \ref{lem:AntiCom} that $\widetilde{x}_{v(e)}, \widetilde{x}_{w(e)} \in S^2$. Then $[\widetilde{x}_{v(e)}, \widetilde{x}_{w(e)}] = 1$ implies $\widetilde{x}_{v(e)} = \pm \widetilde{x}_{w(e)}$. Thus $x_{v(e)} = x_{w(e)}$. Then any new relations are automatically satisfied by $x''(q(v(e)))$. 
    \end{proof}

In the hypotheses of  Lemma \ref{lem:EdgeContraction}
, we assumed that $\Lambda = q^* \Lambda''$ for some edge marking $\Lambda''$ for $K''$. When this is the case we say that $\Lambda''$ is an \emph{$e$-reduction of $\Lambda$}. Not every edge marking on $K$ has an $e$-reduction.

\begin{example}\label{ex:EdgeContraction}
Illustrated on the top in the figure below is a marked graph $(K, \Lambda)$ with some of the markings highlighted. The edge $e$ (unlabeled) connects vertices $v$ and $w$, and is 1-marked. Note that $v$ and $w$ are each incident to at least one $-1$-marked edge; this corresponds to case (ii) in Lemma \ref{lem:EdgeContraction}. The quotient map $q$ deletes the edge $e$ producing the graph $K''$ on the bottom with $v'' = q(v) = q(w)$ and $u'' = q(u)$. In Lemma \ref{lem:EdgeContraction} we assumed that $\Lambda$ has an $e$-reduction in the sense that $\Lambda =q^* \Lambda''$ is pulled back from a marking $\Lambda''$ on $K''$. The edge marking $\Lambda$ would not have an $e$-reduction if, for example, the edge $\left\{u, v \right\}$ in $K$ were 1-marked.
\begin{center}
    \vspace{4pt}
  $(K, \Lambda):$\hspace{1cm} \begin{tikzcd}
        \node  (TL) at (-4, 2)[circle,draw=blue,fill=blue!20!,minimum size=.8cm]{};
        \node  (TM) at (0, 2)[circle,draw=blue,fill=blue!20!,minimum size=.8cm]{};
        \node  (TR) at (4, 2)[circle,draw=blue,fill=blue!20!,minimum size=.8cm]{};
        \node  (ML) at (-4, 0)[circle,draw=blue,fill=blue!20!,minimum size=.8cm]{};
        \node  (MM) at (-2, 0)[circle,draw=blue,fill=blue!40!]{v};
        \node (MR) at (2, 0)[circle,draw=blue,fill=blue!40!]{w};
        \node  (BL) at (-4, -2)[circle,draw=blue,fill=blue!20!,minimum size=.8cm]{};
        \node (BM) at (0, -2)[circle,draw=blue,fill=blue!30!]{u};
        \node  (BR) at (4, -2)[circle,draw=blue,fill=blue!20!,minimum size=.8cm]{};
        \begin{scope}[every node/.style={fill=white,circle,minimum width=0pt,inner sep=2pt, outer sep =0pt}]
        \draw (TL) --  (MM);
        \draw (TM) --  (MM);
        \draw (TM) --  (MR);
        \draw (TR) -- (MR);
        \draw (ML) --  (MM);
        \draw (MM) edge node {1}  (MR);
        \draw (BL) edge node {-1}   (MM);
        \draw (BM) -- (MM);
        \draw (BM) edge node {-1}   (MR);
        \draw (BR) -- (MR);
        \end{scope}
    \end{tikzcd} 
     \vspace{.2cm} 
     
    \hspace{2.1cm} \Bigg\downarrow q   

      \vspace{.2cm} 
  \hspace{-.6cm}$(K'', \Lambda''):$\hspace{1cm}  \begin{tikzcd}
        \node  (TL) at (-2, 2)[circle,draw=blue,fill=blue!20!,minimum size=.8cm]{};
        \node  (TM) at (0, 2)[circle,draw=blue,fill=blue!20!,minimum size=.8cm]{};
        \node  (TR) at (2, 2)[circle,draw=blue,fill=blue!20!,minimum size=.8cm]{};
        \node  (ML) at (-2, 0)[circle,draw=blue,fill=blue!20!,minimum size=.8cm]{};
        \node  (MM) at (0, 0)[circle,draw=blue,fill=blue!40!]{v''};
        \node  (BL) at (-2, -2)[circle,draw=blue,fill=blue!20!,minimum size=.8cm]{};
        \node  (BM) at (0, -2)[circle,draw=blue,fill=blue!30!]{u''};
        \node  (BR) at (2, -2)[circle,draw=blue,fill=blue!20!,minimum size=.8cm]{};
        \begin{scope}[every node/.style={fill=white,circle,minimum width=0pt,inner sep=2pt, outer sep =0pt}]
        \draw (TL) -- (MM);
        \draw (TM) -- (MM);
        \draw (TR) -- (MM);
        \draw (ML) -- (MM);
        \draw (BL) edge node {-1} (MM);
        \draw (BM) edge node {-1}  (MM);
        \draw (BR) -- (MM);
        \end{scope}
    \end{tikzcd}
       \end{center}
       \end{example}

The next lemma tells us what happens when $\Lambda$ has no $e$-reduction (at least, in the setting of the Edge-Contraction Lemma). 

\begin{lemma}[Edge-Contraction 2]\label{lem:EdgeContraction2}
   Let $\Lambda$ be an edge marking for $K$. Assume $e \in \E$ is a $1$-marked edge, but all other edges adjacent to $v(e)$ and $w(e)$ are $-1$-marked edges. If $\Lambda$ has no $e$-reduction, then $o^{-1}_K(\Lambda)$ is empty.
\end{lemma}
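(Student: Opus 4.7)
The plan is to prove the contrapositive: assuming $o_K^{-1}(\Lambda)$ is nonempty, we produce an $e$-reduction of $\Lambda$. Let $v = v(e)$ and $w = w(e)$, and fix $x \in o_K^{-1}(\Lambda)$. The key observation will be that, under the hypothesis of the lemma, $x_v$ and $x_w$ must coincide in $\SO(3)$; once this is established, the $e$-reduction can be built by essentially running the surjectivity construction from the proof of Lemma \ref{lem:EdgeContraction} in reverse.

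To establish $x_v = x_w$, I would first argue that $\widetilde{x}_v, \widetilde{x}_w \in S^2$. Since every edge incident to $v$ other than $e$ is $-1$-marked, picking any such edge and invoking Lemma \ref{lem:AntiCom} forces $\widetilde{x}_v \in S^2$; the same argument gives $\widetilde{x}_w \in S^2$. (If either of $v$ or $w$ is a leaf, then Lemma \ref{lem:EdgeContraction} case (i) already supplies an $e$-reduction, so we may assume neither is a leaf.) Since $e$ is $1$-marked we have $[\widetilde{x}_v, \widetilde{x}_w] = 1$, and Lemma \ref{lem:Com} then yields $\widetilde{x}_v = \pm \widetilde{x}_w$, whence $x_v = x_w$ in $\SO(3)$.

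With this identity in hand, I would define $x'' \in \R(K'', \SO(3))$ by setting $x''(v'') \defeq x_v = x_w$, where $v'' = q(v) = q(w)$, and $x''(q(u)) \defeq x(u)$ for the remaining vertices $u$. Every edge of $K''$ incident to $v''$ is the image of an edge of $K$ incident to $v$ or to $w$, so the required commutation relation for $x''$ is inherited from the representation property of $x$, with the equality $x_v = x_w$ serving to reconcile the two possible preimages. By construction $q^* x'' = x$.

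Setting $\Lambda'' \defeq o_{K''}(x'')$ and applying Proposition \ref{prop:natty} yields $q^* \Lambda'' = o_K(q^* x'') = o_K(x) = \Lambda$, so $\Lambda''$ is an $e$-reduction of $\Lambda$, contradicting the hypothesis. The only genuine content of the argument is the forced equality $x_v = x_w$, which is exactly what the setup of the lemma (every other edge at $v$ and $w$ being $-1$-marked) is engineered to produce via Lemmas \ref{lem:AntiCom} and \ref{lem:Com}; once that is in place, the remaining steps are essentially a repackaging of the corresponding construction in the proof of Lemma \ref{lem:EdgeContraction}.
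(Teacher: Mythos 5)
Your proof is correct and rests on the same geometric mechanism the paper uses: under the marking hypothesis, Lemma~\ref{lem:AntiCom} places $\widetilde{x}_{v(e)}$ and $\widetilde{x}_{w(e)}$ in $S^2$, and then Lemma~\ref{lem:Com} together with $\Lambda_e = 1$ forces $\widetilde{x}_{v(e)} = \pm\widetilde{x}_{w(e)}$, i.e.\ $x_{v(e)} = x_{w(e)}$. The difference is one of packaging: the paper takes the vertex $u$ witnessing the failure of an $e$-reduction (a common neighbor with $\Lambda_{\{u,v(e)\}} \neq \Lambda_{\{u,w(e)\}}$) and reads off a direct contradiction, since $\widetilde{x}_{v(e)} = \pm\widetilde{x}_{w(e)}$ forces $[\widetilde{x}_u,\widetilde{x}_{v(e)}] = [\widetilde{x}_u,\widetilde{x}_{w(e)}]$. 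You instead argue by contraposition, pushing $x$ down to a representation $x''$ of $K''$ and setting $\Lambda'' = o_{K''}(x'')$, which is more work but gives the same conclusion.

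Two small points worth tightening. First, the parenthetical ``Lemma~\ref{lem:EdgeContraction} case (i) already supplies an $e$-reduction'' is not quite right: that lemma \emph{assumes} an $e$-reduction exists rather than producing one. The correct (and easy) observation is that if $v(e)$ or $w(e)$ is a leaf, then $v(e)$ and $w(e)$ have no common neighbor, so there is nothing to obstruct the existence of $\Lambda''$. Second, your appeal to Proposition~\ref{prop:natty} for $q$ is a mild overreach, since the contraction map $q$ collapses $e$ to a vertex and hence isn't a graph homomorphism in the sense that proposition addresses; however, the needed identity $q^*o_{K''}(x'') = o_K(q^*x'')$ holds on $\E \setminus \{e\}$ by the same direct computation as in that proposition, and on $e$ both sides are $1$, so the conclusion stands.
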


\begin{proof}
    When no reduction exists there is a single vertex $u \in \V$ so that $\left\{u, v(e) \right\} \in \E$ and $\left\{u, w(e) \right\} \in \E$ are edges, but $\Lambda_{\left\{u, v(e) \right\}} \neq \Lambda_{\left\{u, w(e) \right\}}$. As we just saw at the end of the proof of the Edge Contraction Lemma, the existence of these edges implies
    $$\widetilde{x}_{v(e)} = \pm \widetilde{x}_{w(e)}$$
    for any $x \in \R(K, \SO(3))$. For sake of contradiction, suppose there is some $x \in o^{-1}_K(\Lambda) \subseteq \R(K, \SO(3))$. Then
    $$[\widetilde{x}_u, \widetilde{x}_{v(e)}] = o_K(x)_{\left\{u, v(e) \right\}} = \Lambda_{\left\{u, v(e) \right\}} \neq \Lambda_{\left\{u, w(e) \right\}}  = o_K(x)_{\left\{u, w(e) \right\}}= [\widetilde{x}_u, \widetilde{x}_{w(e)}].$$
    However, since $\widetilde{x}_{w(e)} = \pm \widetilde{x}_{v(e)}$, we have 
    $$[\widetilde{x}_u, \widetilde{x}_{v(e)}]  = [\widetilde{x}_u, \widetilde{x}_{w(e)}],$$
    which is a contradiction. 
\end{proof}

We just saw conditions that guarantee when a fiber of the obstruction map is empty. The next lemma gives conditions under which this fiber is connected. 

\begin{proposition}\label{prop:ConnectedFiber}
    Let $(K, \Lambda)$ be a marked graph so that
    \begin{enumerate}
        \item[(i)] all edges are marked with $-1$, and
        \item[(ii)] there is an ordering $\V = \left\{v_1, ..., v_n\right\}$ of the vertices so that $N_{v_i} \cap \{v_1, ..., v_{i-1}\}$ has cardinality at most $ 2$ for all $i$. 
    \end{enumerate}
    Then $o_K^{-1}(\Lambda)$ is connected.
\end{proposition}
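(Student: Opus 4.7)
The plan is to induct on $n=|\V|$, with the trivial base $n\le 1$. For the inductive step I would take $K'=K-v_n$ with the inherited ordering and marking $\Lambda'=\Lambda|_{K'}$: since dropping the last vertex can only shrink back-neighborhoods, $(K',\Lambda')$ still satisfies (i) and (ii), so by induction $o_{K'}^{-1}(\Lambda')$ is connected. The main tool will be the elementary fact that a closed continuous surjection with connected fibers onto a connected Hausdorff space has connected domain. Because $o_K^{-1}(\Lambda)$ is compact (closed in $\SO(3)^{\V}$), the restriction map $\pi:o_K^{-1}(\Lambda)\to o_{K'}^{-1}(\Lambda')$ is automatically closed, so I only need surjectivity and fiber-connectedness.

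Let $J\defeq N_{v_n}$, so $|J|\le 2$ by (ii). Using Lemmas \ref{lem:Com} and \ref{lem:AntiCom}, the fiber of $\pi$ over $y$ is $\SO(3)$ if $|J|=0$, a great circle $\cong\mathbb{RP}^1$ if $|J|=1$, or either a single point or $\mathbb{RP}^1$ if $|J|=2$ (according as the two back-neighbor lifts are linearly independent or parallel). Each nonempty fiber is connected, and the fiber is automatically nonempty whenever every $v_j\in J$ has a further incident edge in $K'$, because such an edge forces $\tilde{y}_{v_j}\in S^2$. When some $v_j\in J$ is instead isolated in $K'$---i.e.\ has $v_n$ as its unique neighbor in $K$---I would split into two situations. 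If every element of $J$ is isolated in $K'$, then $J\cup\{v_n\}$ is a connected component of $K$ consisting of a single edge or of the length-two path $v_j$--$v_n$--$v_k$, and $o_K^{-1}(\Lambda)$ splits as a product of the smaller fiber on the complementary subgraph (to which induction applies) with a visibly connected configuration space of orthogonal axes in $\mathbb{RP}^2$, itself a quotient of the Stiefel manifold $V_2(\mathbb{R}^3)\cong\SO(3)$. Otherwise (only possible when $|J|=2$), I would apply induction instead to $K-v_j$, which still satisfies (ii), and then invoke the closed-surjection lemma via the great-circle fiber of the map $o_K^{-1}(\Lambda)\to o_{K-v_j}^{-1}(\cdots)$ that forgets $v_j$.

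The subtle subcase, and the one I expect to be the main obstacle, is $|J|=2$ with both $v_j,v_k$ having further neighbors in $K'$ and with $\{v_j,v_k\}\notin\E$. Here the fiber of $\pi$ genuinely jumps dimension---being a point over the open set $\{y_{v_j}\ne y_{v_k}\}$ and a circle over the closed locus $\{y_{v_j}=y_{v_k}\}$---so $\pi$ is not locally trivial and a naive fiber-bundle or path-lifting argument fails. The resolution, and the conceptual heart of the proof, is that the connectedness lemma needs only closedness and connected fibers, not local triviality, so the dimension jump is harmless thanks to the compactness of $\SO(3)^{\V}$.
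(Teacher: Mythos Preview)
Your argument is correct and follows the same inductive skeleton as the paper: delete $v_n$, apply the inductive hypothesis to $K'=K-v_n$, and use the closed-map lemma after checking that the forgetting map to $o_{K'}^{-1}(\Lambda')$ is surjective with connected fibers, those fibers being computed via Lemmas~\ref{lem:Com} and~\ref{lem:AntiCom}. In one respect you are actually more careful than the paper's own proof: the paper tacitly assumes each $S_w^1$ is a nonempty great circle, but this requires $\widetilde{y}_w\in S^2$, which can fail when $w\in N_{v_n}$ has no further incident edge in $K'$ (e.g.\ already for $K$ a single edge); your case split---peeling off the resulting small path component, or deleting the pendant $v_j$ rather than $v_n$---is precisely what is needed to secure surjectivity.
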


\begin{proof}
    We will use induction on $\vert \V \vert$. Assume $(K, \Lambda)$ satisfies (i) and (ii) and write $v_1, \ldots , v_n$ for the ordering of (ii). Let $K' = K - \{v_n\}$. Then $(K', i^* \Lambda)$ satisfies (i) and (ii) and so $o^{-1}_{K'}(i^* \Lambda)$ is connected by the inductive hypothesis. Consider the map
    \[ i^* : o_{K}^{-1}(\Lambda) \longrightarrow o_{K'}^{-1}(i^* \Lambda) .\]
 It suffices to show the fibers of this map are nonempty and connected.
 
 The fiber over $x \in o_{K'}^{-1}(i^* \Lambda)$ is
    $$(i^*)^{-1}(x) \cong  \left\{ g \in \SO(3) \;  \big|  \; [\widetilde{g}, \widetilde{x}_w] = -1 \text{ for all } w \in N_{v_n} \right\}$$
    where $\pi: S^3 \rightarrow \SO(3)$ is the projection. If $N_{v_n}$ is empty, then this fiber is homeomorphic to $\SO(3)$, which is nonempty and connected. We may assume therefore that $N_{v_n}$ is not empty. By assumption this consists of one or two points. 
    
    By Lemma \ref{lem:AntiCom}, for each $w$ the set of $\widetilde{g} \in S^3$ with $[\widetilde{g} , \widetilde{x}_w]  = -1$ is a great circle in $S^2$; denote this circle by $S_w^1$. Then
    $$(i^*)^{-1}(x) = \pi\left( \bigcap_{w \in N_{v_n}} S_w^1 \right).$$
    If $N_{v_n} = \left\{w_1 \right\}$ consists of one element, then this intersection is the circle $S_{w_1}^1$ and so its projection to $\SO(3)$ is connected. If $N_{v_n} = \left\{w_1, w_2 \right\}$ consists of two elements, then this intersection consists of two antipodal points in $S^2$. The projection to $\SO(3)$ of antipodal points is a single point, which is again connected. 
\end{proof}

\begin{proof}[Proof of Theorem \ref{thm:Inj}]
    The class of graphs under consideration in this theorem is closed under vertex-deletion and edge-contraction. Consider a marking $\Lambda$. We claim that $o_K^{-1}(\Lambda)$ is connected or empty. By the Vertex-Deletion Lemma \ref{lem:VertexDeletion} and the Edge-Contraction Lemma \ref{lem:EdgeContraction}, we can reduce to the case where $\Lambda$ assigns $-1$ to all edges. Since 
    $$o_{K_1 \amalg K_2}^{-1}((\Lambda_1, \Lambda_2)) = o^{-1}_{K_1}(\Lambda_1) \times o^{-1}_{K_2}(\Lambda_2),$$ 
    it suffices to suppose $K$ is connected. For cycles and trees, Proposition \ref{prop:ConnectedFiber} implies that $o^{-1}_K(\Lambda)$ is connected. If $K$ is complete and not a cycle or a path, then $|\V | \geq 4$. In this case $o^{-1}(\Lambda)$ is empty: Any element $x \in o^{-1}(\Lambda)$ would produce 4 elements of $S^3$ that mutually anticommute, which is impossible. 
\end{proof}

\begin{proof}[Proof of Theorem \ref{thm:Surj}]
   It suffices to show that $o^{-1}_K(\Lambda)$ is nonempty for each edge marking $\Lambda$. As in the proof of Theorem \ref{thm:Inj}, we can assume that $K$ is connected and $\Lambda$ assigns $-1$ to all edges. The result follows from Proposition \ref{prop:ConnectedFiber}.

    Conversely, suppose $K$ is not a cycle, but contains a 3-cycle. Then $K$ contains a subgraph $K'$ of the form appearing in Example \ref{ex:1}. As we saw in that example, $K'$ admits an edge marking $\Lambda'$ with $o^{-1}_{K'}(\Lambda')$ empty. Write $i: K' \rightarrow K$ for the inclusion and pick any edge marking $\Lambda$ for $K$ with $i^* \Lambda = \Lambda'$ (since $i$ is injective, $i^*: \pi_1(G)^{\E} \rightarrow \pi_1(G)^{\E'}$ is automatically surjective). Then by Corollary \ref{cor:Funct}, we have a map
    $$i^*: o_K^{-1}(\Lambda) \longrightarrow o_{K'}^{-1}(\Lambda') = \emptyset$$
    and so $o_K^{-1}(\Lambda) $ is empty. 
\end{proof}

\subsection{Proofs of Theorems \ref{thm:2} and \ref{thm:3}, and Corollary \ref{cor:4}}\label{sec:Proofs}

Theorem \ref{thm:2} is an immediate consequence of Theorems \ref{thm:Inj} and \ref{thm:Surj}. We see also from this that the obstruction map $o_K$ detects the components of $\R(K, \SO(3))$. Now suppose $P \rightarrow X$ is as in Theorem \ref{thm:3}. When $\A_{\mathrm{flat}}(P)$ is not empty, we conclude from Corollary \ref{cor:Components} that $\A_{\mathrm{flat}}(P) / \G_0(P)$ is connected. Since $G = \SO(3)$ is connected, it follows from the sequence (\ref{eq:Gauge}) that there is a homeomorphism
$$\A_{\mathrm{flat}}(P) / \G(P) \cong \Big( \A_{\mathrm{flat}}(P) / \G_0(P) \Big) / G$$
and so $\A_{\mathrm{flat}}(P) / \G(P) $ is connected. This proves Theorem \ref{thm:3}.

To prove Corollary \ref{cor:4}, assume that $X$ is a 3-manifold with $\pi_1(X) \cong \Gamma_K$ a RAAG. Droms theorem \cite{Droms} implies that $K$ is a tree or a 3-cycle. Theorem \ref{thm:Surj} tells us the obstruction map is surjective with $G = \SO(3)$, so Corollary \ref{cor:Surj} implies that all principal $\SO(3)$-bundles $P$ on $X$ admit flat connections. Thus $\A_{\mathrm{flat}}(P)/ \G(P)$ is not empty, and its connectedness is a consequence of Theorem \ref{thm:3}. \qed


\bibliographystyle{alpha}

\end{document}